\documentclass[a4paper, 11pt, twoside, leqno]{article}
\usepackage[T1]{fontenc}
\usepackage[utf8x]{inputenc}
\usepackage[english]{babel}
\usepackage{lmodern}               % Latin Modern font
\usepackage{amsmath,amsthm,amssymb}
\usepackage{mathrsfs,esint,bbm,stmaryrd}
\usepackage[shortlabels]{enumitem}
\usepackage{graphicx,xcolor,subcaption}
\usepackage[textwidth=16.1cm,centering,headheight=24pt]{geometry}
\usepackage{authblk}
\usepackage{mathtools}

\newtheorem{theorem}{Theorem}           % Bold title, italic text
\newtheorem{corollary}[theorem]{Corollary}
\newtheorem{lemma}[theorem]{Lemma}
\newtheorem{prop}[theorem]{Proposition}

\theoremstyle{definition}              % Bold title, roman text

\theoremstyle{remark}                  % Italic title, roman text
\newtheorem{step}{Step}

\newtheorem{remark}{Remark}

\DeclareMathOperator{\dist}{dist}                                   % distance
\DeclareMathOperator{\spt}{spt}                                     % index
\let\div\relax
\DeclareMathOperator{\div}{div}                                     % divergence

\newcommand{\abs}[1]{\left| #1 \right|}                             % absolute value
\newcommand{\norm}[1]{\left\| #1 \right\|}                          % norm
\newcommand{\csubset}{\subset\!\subset}                             % compact inclusion

\DeclareMathAlphabet{\mathpzc}{OT1}{pzc}{m}{it}

\newcommand{\T}{\mathrm{T}}
\renewcommand{\d}{\mathrm{d}}
\renewcommand{\o}{\mathrm{o}}

\newcommand{\R}{\mathbb{R}}

\newcommand{\Q}{\mathbf{Q}}     
\newcommand{\M}{\mathbf{M}}
\newcommand{\I}{\mathbf{I}}
\newcommand{\X}{\mathbf{X}}

\newcommand{\n}{\mathbf{n}}
\newcommand{\m}{\mathbf{m}}

\newcommand{\e}{\mathbf{e}}

\renewcommand{\u}{\mathbf{u}}
\newcommand{\F}{\mathscr{F}}
\renewcommand{\H}{\mathscr{H}}

\newcommand{\eps}{\varepsilon}

\newcommand{\Cpot}{{\rm C}_{\rm pot}}

\newcommand{\bbV}{\mathbb{V}}

\SetSymbolFont{stmry}{bold}{U}{stmry}{m}{n}  % Bold Greek letters
\newcommand{\nnu}{{\boldsymbol{\nu}}}

\newcommand{\ttau}{{\boldsymbol{\tau}}}

\newcommand{\Sz}{\mathscr{S}_0^{2\times 2}}
\newcommand{\Qb}{\Q_{\mathrm{bd}}}
\newcommand{\Mb}{\M_{\mathrm{bd}}}
\newcommand{\nb}{\n_{\mathrm{bd}}}
\newcommand{\bbS}{\mathbb{S}}

\renewcommand{\v}{\mathbf{v}}

\newcommand{\mres}{\mathbin{\vrule height 1.6ex depth 0pt width
0.13ex\vrule height 0.13ex depth 0pt width 1.3ex}}

\definecolor{lightblue}{rgb}{0.22,0.45,0.70}   % light blue
\definecolor{darkgray}{gray}{0.4}    % dark grey
\definecolor{lightgray}{gray}{0.8}

\title{Ferronematics: integrality of the limiting interface in the strong coupling regime}
\author{Federico Luigi Dipasquale, Yoshihiro Tonegawa}
\date{}

\newcommand{\Addresses}{{% additional braces for segregating \footnotesize
  \bigskip
  \footnotesize

  Federico~Luigi~Dipasquale \\
  \textsc{Scuola Superiore Meridionale}\\
  Via Mezzocannone 4, 80138 Napoli, Italy\\
  \textit{E-mail address}: \texttt{f.dipasquale@ssmeridionale.it}

    \medskip

   Yoshihiro~Tonegawa \\
    \textsc{Department of Mathematics, Institute of Science Tokyo}\\
    2-12-1 Ookayama, Meguro-ku, Tokyo 152-8551, Japan\\ %\par\nopagebreak
  \textit{E-mail address}: \texttt{tonegawa@math.titech.ac.jp}
}}

\begin{document}

\maketitle

\begin{abstract}
	We consider a vectorial energy functional proposed in the physical 
	literature as a simplified model for thin films of ferronematics --- composite 
	materials formed by dispersing magnetic nanoparticles into a 
	nematic liquid crystals host. The model features two order parameters: 
	a reduced $\Q$-tensor field $\Q$, describing the nematic liquid crystal component, 
	and a vector field $\M$, accounting for the average polarisation vector 
	field generated by the included particles. The energy contains a 
	coupling term promoting alignment between $\Q$ and $\M$.
	It has been shown in~\cite{CDS1,CDS2} 
	that, as a small parameter $\eps$ tends to zero, the energy of critical pairs 
	$(\Q_\eps,\,\M_\eps)$ concentrates on \emph{distinct} singular sets: a finite 
	set of points for the $\Q_\eps$-component and the support of a $\H^1$-rectifiable  
	varifold for the $\M_\eps$-component, with first variation supported on 
	the singular set for the $\Q_\eps$-component. 
	We show in this paper that, if the coupling constant is above a 
	certain (explicit) threshold, then, up to rescaling its density by a 
	material constant, such a  
	limiting varifold has integer multiplicity.
	
 \medskip
 \noindent
 \textbf{Keywords:}
 Ginzburg-Landau functional, Allen-Cahn equation, vectorial problems, topological singularities, rectifiable sets.

 \smallskip
 \noindent
 \textbf{2020 Mathematics Subject Classification:}
         35Q56 % Ginzburg-Landau
 $\cdot$ 76A15 % Liquid crystals
 $\cdot$ 49Q15 % Geometric measure theory, integral & normal currents
 $\cdot$ 26B30 % BV functions of several variables
\end{abstract}

\section*{Introduction}
%The formation of gradient-driven singular structures has been the 
%subject of intensive investigation over the past few decades. 
%In particular, 
Over the past few decades, much effort has been devoted to the study of  
%of energy functionals like 
the Ginzburg-Landau functional of superconductivity 
and to the Allen-Cahn functional of phase separation, 
especially concerning the asymptotic behaviour, as a small 
parameter $\eps$ tends to zero, of their 
minimising and non-minimising critical points.  
%Likewise,  in the limit $\eps \to 0$ 
%has been deeply explored. 
Seminal works in these fields, such as, for instance, 
\cite{BBH, BBO, AlbertiBaldoOrlandi, JerrardSoner, SandierSerfaty, Modica, Sternberg, Ilmanen, HutchinsonTonegawa, B}, 
revealed the emergence of gradient-driven singular structures of codimension~2 
(in the case of Ginzburg-Landau theory) and codimension~1 (in the 
Allen-Cahn case) as the sets where the energy of minimisers and 
critical points concentrates, at first order, as $\eps$ tends to zero.
From the point of view of geometric measure theory, such 
energy-concentration phenomena are extremely interesting, as they provide 
a mechanism to construct non-trivial integer-multiplicity rectifiable 
currents minimising the energy in an appropriate homology class and 
stationary varifolds of codimension 1 and 2, both in the 
Euclidean and in the Riemannian context
(e.g., \cite{Baraket, Orlandi, IgnatJerrard, CDO1, CDO2, PigatiStern, ParisePigatiStern, DePhilippisPigati,  PisantePunzo1, LiPariseSarnataro}). 
Moreover, Pacard and Ritor\'e~\cite{PacardRitore} and De~Philippis and Pigati~\cite{DePhilippisPigati} have shown that 
any non-degenerate minimal submanifold of codimension 1 or 2 of a Riemannian 
manifold can be recovered as the energy-concentration set of critical 
points of the Allen-Cahn functional, if of codimension 1 (see~\cite{PacardRitore}), or of 
the Ginzburg-Landau functional, if of codimension 2 (see~\cite{DePhilippisPigati}). 

While the Ginzburg-Landau functional is inherently vectorial, 
the Allen-Cahn functional makes sense for both scalar and vectorial order parameters, 
depending on the application at hand (for instance, in the case of three or more 
phases, the vectorial theory must be used). 
The asymptotic behaviour of minimisers has been studied within the framework of 
$\Gamma$-convergence, both in the scalar and the vectorial 
case~\cite{Modica, Sternberg, FonsecaTartar, Baldo}.
%The Allen-Cahn theory is very well developed and satisfactory in the scalar case and 
%much less understood in the vectorial one, although $\Gamma$-convergence results 
%where soon obtained~\cite{Baldo, FonsecaTartar} after the publication of 
%Modica's pioneering paper~\cite{Modica}. 
%Concerning the asymptotic behaviour of (non-minimising, in general) critical points, however, the scalar and 
%the vectorial theory are completely different.  
However, when coming to the analysis of the asymptotic behaviour of 
(non-minimising, in general) critical points, 
the scalar and the vectorial theories are profoundly different.  
The asymptotic behaviour of general critical points in the scalar case was investigated 
by Hutchinson and the second author~\cite{HutchinsonTonegawa} 
through a careful PDE analysis  
%and subsequent work 
%(for instance, \cite{Tonegawa-Stability, MizunoTonegawa, NagaseTonegawa, RogerSchatzle}) 
in which a key role is played by the so-called \emph{discrepancy function},  
defined as the difference between the potential energy density and the elastic 
energy density. By cleverly exploiting the maximum principle, Modica~\cite{Modica} 
showed that the discrepancy of entire scalar solutions is pointwise non-negative, already 
at the $\eps$-level. In~\cite{HutchinsonTonegawa}, it is proved that,  
in bounded domains, the discrepancy vanishes asymptotically in the $L^1$-sense 
for every sequence of critical points with equibounded energy. In the vectorial case, 
however, there is no reason for these properties to hold (in fact,  
an example of entire solution whose discrepancy changes sign has been constructed in~\cite{Smyrnelis}) 
and the asymptotic behaviour of non-minimising critical points has eluded precise 
description up to the recent breakthrough paper~\cite{B} by Bethuel, 
in which new PDEs tools for the Allen-Cahn system on two-dimensional 
domains has been developed 
that avoid the discrepancy function and are instead ultimately based on refined 
energy estimates.

\vskip5pt

In this paper we are concerned with a model for \emph{ferronematics} which couples 
the Ginzburg-Landau theory with the vectorial Allen-Cahn theory. % in a rather non-trivial way.
Ferronematics are composite materials obtained by dispersing magnetic nanoparticles into a nematic 
liquid crystal hosts. The physics of such systems, firstly devised theoretically 
by Brochard and de~Gennes in 1970~\cite{BrochardDeGennes}, and then realised experimentally 
in various stages (see, e.g., \cite{Mertelj} and references therein) is very rich, 
%not completely understood, 
and the object of ongoing investigation. 
%From the experimental viewpoint, one of the main problems was the tendency of the 
%magnetic particles to aggregate and form chains, thus changing the physics of the 
%system. Up to recent times, very much more sophisticated experimental designed 
%were devised, this inconvenience forced experimentalists to work 
%in the so-called \emph{superdilute regime}, in which the density of magnetic particles 
%relatively low with respect to that of nematic molecules.

A simplified model for thin films of ferronematics in the so-called 
\emph{superdilute regime} (in which the density of magnetic particles 
is relatively low with respect to that of nematic molecules)
has been proposed in~\cite{Bisht2019}. The model features 
two order parameters: a reduced $\Q$-tensor field associated with the liquid crystal host, 
which is a map $\Q$ from the physical domain $\Omega \subset \R^2$ to $\Sz$, 
the space of $2 \times 2$, symmetric, real matrices with trace equal to zero,  
and the average polarisation vector field $\M : \Omega \to \R^2$ 
generated by the included particles. After non-dimensionalisation, the model can be described as follows.

Let $\Omega \subset \R^2$ be an open, bounded, and simply connected set with smooth boundary $\partial \Omega$, let $\eps > 0$ be a small parameter, and let 
$G \subseteq \Omega$ be any measurable set. We define
\begin{equation}\label{eq:functional}
    \F_\eps(\Q,\,\M;\,G) := \int_G \left\{ \frac{1}{2} \abs{\nabla \Q}^2 + \frac{\eps}{2} \abs{\nabla \M}^2 + \frac{1}{\eps^2} f_\eps(\Q,\,\M)\right\} \,{\d}x,
\end{equation}
where the potential energy density $f_\eps(\Q,\,\M)$ is given by
\begin{equation}\label{eq:f}
    f_\eps(\Q,\,\M) := \frac{1}{4}\left( \abs{\Q_\eps}^2 - 1 \right)^2 + \frac{\eps}{4}\left( \abs{\M_\eps}^2 - 1 \right)^2 - \eps \beta \Q \M \cdot \M + \chi_\eps.
\end{equation}
Here, $\beta$ is a (strictly) positive parameter depending only on the material and the additive constant $\chi_\eps$ ensures that $\inf f_\eps = 0$ for each $\eps > 0$. 

The mathematical study of the functional $\F_\eps$ 
started in \cite{CMSW} and then continued in \cite{CDS1, CDS2, CDS3}, 
encompassing the asymptotic behaviour as $\eps \to 0$ 
of both 
minimising and non-minimising critical points under suitable boundary conditions. 
The limit $\eps \to 0$ should be understood as a large domain limit 
(the size of the domain is much larger than the typical correlation length for 
the liquid crystal molecules, cf.~\cite{CMSW,CDS1}).

In this paper, we refine some of the results obtained in~\cite{CDS1,CDS2} 
for (non-minimising, in general) critical points. 
To motivate the interest towards the study of non-minimising critical points, we 
recall that, while in variational theories for physical systems observable configurations are usually identified minimisers or local minimisers 
of the energy, 
it has been argued on the basis of numerically and experimentally observed cases 
(see, e.g.,~\cite{GurtinMatano} for a broader discussion) that non-minimising critical 
points show up naturally, for instance, when the system decays from a higher to a lower energy state. 
Moreover, numerical simulations~\cite{Bisht2019,CMSW} show an abundance of critical points of the functional~\eqref{eq:functional}.

As in \cite{CDS1, CDS2}, 
we assume to deal with sequences of (non-minimising, in general) 
critical pairs $(\Q_\eps,\,\M_\eps)$ of $\F_\eps$ in $\Omega$ 
and that the boundary data are $\eps$-independent maps 
$\Qb \in C^1(\partial \Omega,\,\Sz)$, $\Mb \in C^1(\partial \Omega,\,\R^2)$ that satisfy either the `pure' Dirichlet boundary conditions 
\begin{equation}\label{hp:bc-dir}
    \abs{\Mb} = \left(\sqrt{2}\beta + 1 \right)^{1/2}, \qquad
    \Qb = \sqrt{2}\left(\frac{\Mb \otimes \Mb}{\sqrt{2}\beta + 1} - \frac{\I}{2}\right) 
\end{equation}
or the `mixed' boundary conditions 
\begin{equation}\label{hp:bc-mixed}
    \Qb = \sqrt{2}\left( \nb \otimes \nb - \frac{\I}{2} \right), \qquad 
    \partial_\nnu \M_\eps = 0,
\end{equation}
where $\nb \in C^1(\partial\Omega,\,\bbS^1)$ is an assigned, 
$\eps$-independent vector field. %with values into $\bbS^1$. 
Finally, we assume that 
%the sequence $\{(\Q_\eps,\,\M_\eps)\}_\eps$ satisfies the following: 
there exists a positive constant $\Cpot$ such that, for every $\eps > 0$, 
\begin{equation}\label{hp:equibdd-f}
    \frac{1}{\eps^2} \int_\Omega f(\Q_\eps,\,\M_\eps) \,{\d}x \leq \Cpot.
\end{equation}
Sufficient conditions for~\eqref{hp:equibdd-f} are provided in~\cite[Remark~1]{CDS1}.

%that 
%will be used in this paper as well. 
%The asymptotic analysis of minimising critica
% 
%In this paper, we study some properties of critical points of the energy functional $\F_\eps$, complementing results in~\cite{CDS}.

The Euler-Lagrange equations of the functional $\F_\eps$ are
\begin{align}
    &-\Delta \Q_\eps + \frac{1}{\eps^2} \left( \abs{\Q_\eps}^2 - 1 \right) \Q_\eps - \frac{\beta}{\eps}\left( \M_\eps \otimes \M_\eps - \frac{\abs{\M_\eps}^2}{2} \I\right) = 0, \label{eq:EL-Q} \\
    &-\Delta \M_\eps + \frac{1}{\eps^2}\left(\abs{\M_\eps}^2 - 1 \right)\M_\eps - \frac{2\beta}{\eps^2}\Q_\eps\M_\eps = 0\label{eq:EL-M}.
\end{align}
Under the above assumptions, upon using~\eqref{eq:EL-Q},~\eqref{eq:EL-M}, 
it was shown in~\cite{CDS1} that 
\begin{equation}\label{eq:energy-bounds}
    \F_\eps(\Q_\eps,\,\M_\eps) \leq C \abs{\log \eps}, \qquad
    \int_\Omega \abs{\nabla \Q_\eps}^2\,{\d}x \leq C \abs{\log \eps}, \qquad 
    \eps \int_\Omega \abs{\nabla \M_\eps}^2 \,{\d}x \leq C,
\end{equation}
where the constant $C$ does not depend on $\eps$. 

As in \cite{CDS1, CDS2}, we define the energy densities
\begin{align}
    &\mu_\eps := \frac{1}{\abs{\log\eps}} \left( \frac{1}{2} \abs{\nabla \Q_\eps}^2 + \frac{\eps}{2} \abs{\nabla \M_\eps}^2 + \frac{1}{\eps^2} f_\eps(\Q_\eps,\,\M_\eps) \right), \\
    &\nu_\eps := \frac{\eps}{2} \abs{\nabla \M_\eps}^2 + \frac{1}{\eps^2} f_\eps(\Q_\eps,\,\M_\eps).
\end{align}
%Under assumptions~\eqref{hp:equibdd-f} and boundary conditions 
%as in~\eqref{hp:bc-dir} or~\eqref{hp:bc-mixed}, 
From the assumption~\eqref{hp:equibdd-f} and the energy 
bounds~\eqref{eq:energy-bounds}, 
the sequences of positive functions $\{\mu_\eps\}_\eps$, $\{ \nu_\eps \}_\eps$ are bounded in $L^1(\Omega)$, hence we can find limiting Radon measures $\mu_\star$ and $\nu_\star$ in $\Omega$ such that, after possible extraction of a subsequence,
\[
    \mu_\eps \rightharpoonup^* \mu_\star, 
    \qquad 
    \nu_\eps \rightharpoonup^* \nu_\star
    \qquad 
    \mbox{as } \eps \to 0
\]
in the sense of Radon measures in $\Omega$. 
In the limit, \emph{distinct} (but related) gradient-driven singular 
structures show up. 
Indeed, it is shown in~\cite{CDS1} that $\spt\mu_\star$ is a finite 
set of points and in~\cite{CDS2} that $\spt\nu_\star$ is a union of 
segments with locally constant density. The relationship between 
$\spt\mu_\star$ and $\spt\nu_\star$ is expressed by item~\ref{item:balance-law} 
of Theorem~\ref{thm:B-CDS} below and, in a more precise way, by 
\cite[Theorem~C]{CDS2}.
 
These results are part of the outcome of a broader analysis on 
the systems~\eqref{eq:EL-Q},~\eqref{eq:EL-M}. 
At first sight, both~\eqref{eq:EL-Q},~\eqref{eq:EL-M} look like perturbed 
Ginzburg-Landau systems, and this is true for~\eqref{eq:EL-Q}, although 
the relatively large perturbation term makes the analysis in~\cite{CDS1} quite 
delicate. However, as shown in~\cite{CDS2}, the structure of~\eqref{eq:EL-M} is 
the one of a perturbed Allen-Cahn system.  
%
%As observed above, the structure of the system~\eqref{eq:EL-Q} is overall similar 
%to the standard Ginzburg-Landau equation, and this is key to the 
%arguments in~\cite{CDS1}, which can thus proceed in analogy with the 
%standard Ginzburg-Landau case~\cite{BBH, BBO}, although the presence 
%of the (large) coupling term makes it 
%quite non trivial and calls for additional care and some 
%delicate estimates. 
%
%The structure of the system~\eqref{eq:EL-M} is, instead, completely 
%different, and of Allen-Cahn type. In order to see this, we 
%follow~\cite{CDS2} and, for any 
To see this, which is crucial to our purposes, we argue as in~\cite{CDS2} and, for any given pair $(\Q,\,\M) \in \Sz \times \R^2$, 
we define the functions
\begin{equation}\label{eq:ell}
	\ell(\Q,\,\M) := \frac{1}{4}\left( \abs{\M}^2 - 1 \right)^2 
	- \beta \Q\M\cdot\M + \frac{1}{2}\left(\beta^2 
	+ \sqrt{2}\beta\right),
\end{equation}
and
\begin{equation}\label{eq:V}
	V(\M) := \ell(\Q,\,\M) - \inf_{y \in \R^2} \ell(\Q, \, y).
\end{equation}
As shown in \cite[Lemma~1.2]{CDS1}, for any given $\Q \neq 0$, 
the function $\M \mapsto V(\M)$ has exactly two zeroes, at $\M = \M_\pm$, where 
\[
	\M_\pm := \pm \left( 1 + \sqrt{2}\beta \abs{\Q} \right)^{1/2} \n,
\]
and where $\n$ is a unit vector of $\Q$ corresponding to its positive eigenvalue.

Denoting now $\Q : \Omega \to \Sz$ a $\Q$-tensor field, 
$\M : \Omega \to \R^2$ a vector field, and $G \subseteq \Omega$ 
a measurable set, we define the energy functional 
\begin{equation}\label{eq:Eeps}
	E_\eps(\M;\,G) := \int_G\left\{ \frac{\eps}{2} \abs{\nabla \M}^2 + \frac{1}{\eps} V(\M) \right\}\,{\d}x.
\end{equation}
By construction, if $(\Q_\eps,\,\M_\eps)$ is a critical point of $\F_\eps$, 
then $\M_\eps$ satisfies the Euler-Lagrange equation of $E_\eps$, i.e.,
\begin{equation}\label{eq:EL-Eeps}
	-\eps \Delta \M_\eps + \frac{1}{\eps}\nabla_\M V(\M_\eps) = 0 
	\qquad \mbox{in } \Omega.
\end{equation}
It can be easily checked that (cf. \cite[Proposition~2.3]{CDS2}) that 
\begin{equation}\label{eq:Eeps-bound}
	\sup_{\eps > 0} E_\eps(\M_\eps) \leq \mathcal{E}_0 < +\infty,
\end{equation}
where the constant $\mathcal{E}_0$ depends only on $\Omega$, $\beta$, 
and the $L^1(\partial\Omega)$- and $L^2(\partial\Omega)$-norm of 
$\Qb \times \partial_\ttau \Qb$. 

The point of introducing the auxiliary functional $E_\eps$ is that 
the Equation~\eqref{eq:EL-Eeps} coincides \emph{exactly} with~\eqref{eq:EL-M}, 
so that both $E_\eps$ and~\eqref{eq:EL-Eeps} display an Allen-Cahn 
structure. However, the results of~\cite{B} \emph{cannot} be applied directly, 
because the wells of $V$ are not fixed, but \emph{move} with both $x$ and $\eps$ 
due to their dependence on $\Q_\eps$. Nonetheless, one can still follow the strategy 
of~\cite{B}, combining it with the estimates and convergence results for 
the $\Q$-component from~\cite{CDS1} to control the perturbation terms.
%exploiting the apparatus of estimates 
%developed in~\cite{CDS1} (recalled in Section~\ref{sec:preliminary} to the 
%extent needed in this paper), in~\cite{CDS2} the arguments of~\cite{B} 
%have been adapted to obtain the following theorem.
Thus, upon defining the potential energy densities
\begin{equation}\label{eq:zeta-eps}
	\zeta_\eps := \frac{1}{\eps} V(\M_\eps), 
\end{equation}
which, due to~\eqref{eq:Eeps-bound}, are equibounded in $L^1(\Omega)$, 
one has 
\begin{equation}\label{eq:zeta*}
	\zeta_\eps \rightharpoonup^* \zeta_\star, \qquad 
	\mbox{as } \eps \to 0, 
\end{equation}
and the following theorem has been obtained in~\cite{CDS2}.
\setcounter{theorem}{-1}
\begin{theorem}[{\cite[Theorem~B]{CDS2}}]\label{thm:B-CDS}
	The set $\spt\nu_\star$ is $\H^1$-rectifiable, with locally 
	finite measure. Upon setting 
	\begin{equation}\label{eq:S*}
		\mathfrak{S}_\star := \spt\nu_\star \setminus \spt\mu_\star,
	\end{equation}
	the following holds.
	\begin{enumerate}[(i)] 
	\item\label{item:density} 
	The limiting potential energy measure $\zeta_\star$ is absolutely 
	continuous with respect to the measure $\H^1 \mres \mathfrak{S}_\star$. 
	In other words, there exists a function $\mathfrak{v}_\star : \mathfrak{S}_\star \to \R^+$, 
	locally integrable with respect to the measure $\H^1 \mres \mathfrak{S}_\star$, 
	such that
	\[
		\zeta_\star = \mathfrak{v}_\star \,\H^1 \mres \mathfrak{S}_\star.
	\] 
	The function $\mathfrak{v}_\star$ is locally bounded both from above and 
	from below in any compact set $K \subset \Omega\setminus\spt\mu_\star$.
	\item\label{item:zeta*-weight-measure} The measure $\zeta_\star$ is the weight measure of the $\H^1$-rectifiable varifold 
	$\mathbb{V}_\star$ carried by $\mathfrak{S}_\star$ with density function $\mathfrak{v}_\star$. 
	\item\label{item:balance-law} The varifold $\mathbb{V}_\star$ is stationary in $\Omega \setminus \spt\mu_\star$ and 
	its first variation as a varifold in $\Omega$ is concentrated on $\spt\mu_\star$.
	\item\label{item:segments} The set $\spt\nu_\star$ is locally a union of segments, open relative to $\Omega$, 
	each of which having constant density, given by the value of $\mathfrak{v}_\star$ at any 
	point of the segment. In addition, apart from an exceptional, $\H^1$-null set, around any point $x_0$ of 
	$\mathfrak{S}_\star$, the singular set $\spt\nu_\star$ consists of exactly one segment, 
	with constant density $\mathfrak{v}_\star(x_0)$.
	\item\label{item:unif-conv-M} As $\eps \to 0$, $\M_\eps \to \M_\star$ in $L^\infty_{\rm loc}(\Omega \setminus (\spt\mu_\star \cup \spt\nu_\star))$.
	\end{enumerate}
\end{theorem}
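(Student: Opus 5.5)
This statement is quoted from \cite[Theorem~B]{CDS2}, so in the paper it serves as a black box; what follows is how I would reconstruct its proof, adapting Bethuel's two-dimensional vectorial Allen-Cahn strategy~\cite{B} to the energy $E_\eps$ in~\eqref{eq:Eeps}, whose wells $\M_\pm$ move with $\Q_\eps$.

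\textbf{Step 1: control of the moving wells.} Away from $\spt\mu_\star$, the results of~\cite{CDS1} give $\abs{\Q_\eps}\to 1$ locally uniformly and $\nabla\Q_\eps$ bounded in $L^\infty_{\rm loc}$. In particular $\n_\eps$ has a smooth lifting on simply connected compact sets $K\subset\Omega\setminus\spt\mu_\star$. I would therefore work in a moving frame, writing $\M_\eps$ in components along $\n_\eps$ and $\n_\eps^\perp$. In these coordinates~\eqref{eq:EL-Eeps} becomes an Allen-Cahn system with fixed wells plus lower-order perturbations. These perturbations are of size $O(\abs{\nabla \Q_\eps}\,\eps\abs{\nabla\M_\eps})$, which is integrable and small after the natural rescaling.

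\textbf{Step 2: clearing-out and density bounds.} Using the bound~\eqref{eq:Eeps-bound} together with a Pohozaev/monotonicity-type identity obtained by testing~\eqref{eq:EL-Eeps} with $(x-x_0)\cdot\nabla\M_\eps$, I would prove an $\eta$-clearing-out lemma: small energy in a disc $D_r(x_0)$ forces $\M_\eps$ to stay uniformly close to one well on $D_{r/2}(x_0)$, with exponential decay of $\nu_\eps$ there. Since $\Q_\eps$ contributes an error that vanishes in the limit, this yields uniform upper and lower density bounds for $\nu_\star$ on $\spt\nu_\star\cap K$. Together with item~\ref{item:unif-conv-M}, it gives $\H^1$-finiteness of $\spt\nu_\star$. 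Following~\cite{B}, the equipartition statement (that the discrepancy vanishes) is replaced by the estimate $\zeta_\star \le \nu_\star \le C\zeta_\star$, which gives item~\ref{item:density}.

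\textbf{Step 3: stationarity and structure.} The stress-energy tensor $T_\eps = \eps\,\nabla\M_\eps^{\mathsf T}\nabla\M_\eps - \bigl(\tfrac{\eps}{2}\abs{\nabla\M_\eps}^2+\tfrac1\eps V\bigr)\I$ has divergence equal to the $x$-dependence of $V$ through $\Q_\eps$. On $K$ this divergence tends to $0$ in $L^1$, so the limit tensor is divergence free. The limit tensor is rank-one along the approximate tangent of $\spt\nu_\star$, which follows from the density bounds and the rectifiability theorem of Ambrosio--Soner/Allard type. Identifying the limit of $T_\eps$ with the first variation of $\mathbb V_\star$ then gives items~\ref{item:zeta*-weight-measure} and~\ref{item:balance-law}. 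Finally, a one-dimensional stationary rectifiable varifold in the plane with density bounded below is locally a finite union of segments with constant density; this follows from Allard--Almgren and the constancy theorem, and gives item~\ref{item:segments}.

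\textbf{Main obstacle.} The hardest part is Step 2, specifically the lower density bound and the replacement of the discrepancy control in the vectorial setting. In~\cite{B} these rely on refined energy estimates on circles together with the fixed geometry of the wells. Here they must be redone with perturbation terms that are only controlled through the $\Q$-estimates of~\cite{CDS1}, and those estimates degenerate near $\spt\mu_\star$.
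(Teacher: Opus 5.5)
Your overall plan (Bethuel's strategy from \cite{B}, with the $\Q$-estimates of \cite{CDS1} as input) matches what the paper indicates for \cite{CDS2}. However, one step is false and two others do not deliver what they claim.

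\textbf{The false step (Step~3).} You claim $\div T_\eps\to0$ in $L^1(K)$. In your normalisation $(\div T_\eps)_j=-\eps^{-1}\,\partial_\Q V(\M_\eps):\partial_j\Q_\eps$. Since $\partial_\Q V$ vanishes only to first order at $\M_\pm$, this is not small. In the eigenframe of $\Q_\eps$ it contains the term
\[
-\frac{\beta}{\sqrt{2}\,\eps}\,\partial_j\rho_\eps\,\bigl(1+\sqrt{2}\beta\rho_\eps-u_{1,\eps}^2+u_{2,\eps}^2\bigr).
\]
Across a transition layer $u_{1,\eps}$ changes sign, so $\sigma=\frac{\beta}{\sqrt2}(u_{1,\eps}^2-u_{2,\eps}^2)$ drops by about $2\kappa_\star$. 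By~\eqref{eq:eq-rho}, $w_\eps:=(\rho_\eps-1)/\eps$ solves $-\eps^2\Delta w_\eps+(\rho_\eps+1)\rho_\eps w_\eps=\sigma-4\eps\rho_\eps\abs{\nabla\varphi_\eps}^2$. Hence $w_\eps$ varies by an amount of order one over a length of order $\eps$, i.e.\ $\abs{\nabla\rho_\eps}\sim1$ in the layer, where the bracket is also of order one. So $\norm{\div T_\eps}_{L^1(K)}$ is of the order of the length of the interface, not $o(1)$; Lemma~\ref{lemma:improved-decay-rho} is consistent with this and cannot improve it.

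The force vanishes only in the weak limit, through a cancellation you do not use. The stress-energy tensor of the full functional $\F_\eps$ is divergence free, so $\div T_\eps=-\div T^{\Q}_\eps$, where, up to a constant multiple of $\I$,
\[
T^{\Q}_\eps:=\nabla\Q_\eps\otimes\nabla\Q_\eps-\Bigl(\frac{1}{2}\abs{\nabla\Q_\eps}^2+\frac{1}{4\eps^2}\bigl(\abs{\Q_\eps}^2-1\bigr)^2+\frac{1}{\eps}\inf_{y\in\R^2}\ell(\Q_\eps,y)\Bigr)\I .
\]
You then have to show that, away from $\spt\mu_\star$, $T^{\Q}_\eps$ converges to the stress tensor of the limiting harmonic map, which is divergence free there. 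The potential part tends to a constant by~\eqref{eq:strong-conv-rho-p}. The quadratic part, however, requires strong $W^{1,2}_{\rm loc}(\Omega\setminus\spt\mu_\star)$ convergence of $\Q_\eps$. This is where the $\Q$-convergence results of~\cite{CDS1} must enter the proof of item~(iii), and your sketch has nothing in its place.

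\textbf{Step~2 delivers less than claimed.} Used in contrapositive form, clearing-out gives only the \emph{lower} bound $\nu_\star(B(x_0,r))\geq\eta r$ on $\spt\nu_\star$, hence $\H^1(\spt\nu_\star\cap K)<\infty$. It gives none of the following:
\begin{enumerate}[(a)]
\item the upper density bound;
\item $\nu_\star\leq C\zeta_\star$, which you need for the lower bound on $\mathfrak{v}_\star$ in (i);
\item rectifiability.
\end{enumerate}
In the proof the paper quotes, these rest on the monotonicity of $r\mapsto\zeta_\star(B(x_0,r))/r$ (cf.~\cite[Proposition~3.18]{CDS2}). That monotonicity yields existence of densities and, from there, rectifiability (e.g.\ via Besicovitch's criterion). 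In two dimensions the Pohozaev identity only gives
\[
\frac{\d}{\d r}\Bigl(\frac{1}{r}\int_{B(x_0,r)}\zeta_\eps\,\d x\Bigr)=\frac{1}{2r}\int_{\partial B(x_0,r)}\Bigl(\zeta_\eps-\frac{\eps}{2}\abs{\partial_\ttau\M_\eps}^2+\frac{\eps}{2}\abs{\partial_\nnu\M_\eps}^2\Bigr)\,\d\H^1+(\text{force term}).
\]
So monotonicity hinges on bounding the tangential kinetic energy on circles by the potential energy. You name this as the obstacle, but it is the core of the argument, not a detail.

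An Allard-type rectifiability theorem cannot replace it. $T_\star$ is not yet known to be the first-variation tensor of a varifold, and that identification needs discrepancy control, which is exactly what is missing in the vectorial case. What does hold is the converse direction: \emph{once} rectifiability and $\nu_\star\ll\H^1\mres\spt\nu_\star$ are known, $\div T_\star=0$ forces $T_\star\nnu=0$ on the approximate normal. Then $\mathrm{tr}\,T_\star=-2\zeta_\star$ gives $T_\star=-2\zeta_\star\,\ttau\otimes\ttau$, which is why $\zeta_\star$ is the weight in (ii) without any equipartition.

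\textbf{The premise of Step~1 is unavailable.} You assume $\nabla\Q_\eps$ is bounded in $L^\infty_{\rm loc}$, but only the bounds~\eqref{eq:varphi-Lp-bounds} and~\eqref{eq:GL-bounds-p} with $p<\infty$ are known. Even the Lipschitz bound on $\varphi_\eps$ in Proposition~\ref{prop:decay-u2} uses $\sqrt2\beta>1$. Moreover, the frame errors in~\eqref{eq:EL-u} are not all $O(\abs{\nabla\Q_\eps}\,\eps\abs{\nabla\M_\eps})$. They contain $\eps\,\u_\eps^\perp\Delta\varphi_\eps$, where $\eps\Delta\varphi_\eps=-\frac{\beta u_{1,\eps}u_{2,\eps}}{\sqrt2\rho_\eps}-\frac{2\eps\nabla\rho_\eps\cdot\nabla\varphi_\eps}{\rho_\eps}$ is of order one. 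They also contain the well shift $\sqrt2\beta\,\frac{\rho_\eps-1}{\eps}\,\bar{\u}_\eps$, which is of order one everywhere.
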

%
%In fact, $\spt\nu_\star$ is the support of a $\H^1$-rectifiable 
%varifold $\mathbb{V}_\star$ whose first variation in $\Omega$ is supported on the finite 
%set of points $\spt\mu_\star \cap \Omega$.
%As in~\cite{B}, the cornerstone of the proof is a pivotal monotonicity property of 
%$\zeta_\star$, i.e., that for any $x_0 \in \Omega$, the function $r \mapsto \frac{\zeta_\star(B(x_0,\,r)}{r}$ is monotone non-decreasing. As an immediate consequence, one has
%\[
%	\zeta_\star(\spt\mu_\star) = 0.
%\]
%Moreover, this monotonicity property implies item~\ref{item:density} and is one of the 
%key ingredients in the proof of item~\ref{item:balance-law}.
Notice that Theorem~\ref{thm:B-CDS} says nothing about the multiplicity of the 
limiting varifold $\mathbb{V}_\star$. 
We recall that, in the scalar Allen-Cahn theory~\cite{HutchinsonTonegawa}, the 
integrality of the limiting interface where the energy critical points concentrates 
as $\eps \to 0$ is a characteristic feature, even for the 
perturbed equation, as first shown in~\cite{Tonegawa,Tonegawa2005} for Sobolev type perturbations and then, in~\cite{NagaseTonegawa,RogerSchatzle} for $L^2$ type perturbations. 
By way of contrast, integrality is generally 
\emph{false} in the vectorial case~\cite{B, B-Cetraro}, and hence 
it requires specific analysis, depending on the particular form of the 
potential function at hand. 
Our main result in this paper, Theorem~\ref{mainthm:integrality} below,  
establishes that in the regime 
\[
	\sqrt{2}\beta > 1 
	%\qquad \mbox{or, equivalently, } \qquad \kappa_\star > 1
\] 
the varifold $\mathbb{V}_\star$ has, after rescaling by the constant $\sigma_\beta$ 
in~\eqref{eq:sigma-beta}, integer multiplicity. 
\begin{theorem}\label{mainthm:integrality}
    Let $\zeta_\star = \mathfrak{v}_\star\,\H^1 \mres \mathfrak{S}_\star$ 
    be the limiting potential energy density and let 
    $\bbV_\star = \v(\mathfrak{S}_\star,\,\mathfrak{v}_\star)$ be the 
    varifold associated with $\zeta_\star$ according to Theorem~\ref{thm:B-CDS}. 
    Let 
    \begin{equation}\label{eq:sigma-beta}
        \sigma_\beta := \frac{\sqrt{2}}{3}\left( 1 + \sqrt{2}\beta \right)^{3/2}
    \end{equation}
    and assume that $\sqrt{2}\beta > 1$. %(equivalently, that $\kappa_\star > 1$). 
    Then, the varifold 
    \[
    	\sigma_\beta^{-1} \bbV_* := \left( \mathfrak{S_*},\,\frac{\mathfrak{v}_\star}{ \sigma_\beta} \right)
    \]
    has integer multiplicity. In other words, the function $\sigma_\beta^{-1}\mathfrak{v}_\star$ takes its values into $\mathbb{N}$.
\end{theorem}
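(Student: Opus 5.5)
The plan is to blow up near a generic point of $\mathfrak{S}_\star$ and reduce the multiplicity question to a one-dimensional analysis of the profile of $\M_\eps$ across the segment. I will take $x_0\in\mathfrak{S}_\star$ outside the exceptional $\H^1$-null set of Theorem~\ref{thm:B-CDS}\ref{item:segments}. Near such a point $\spt\nu_\star$ is a single segment, say $\{x_2=0\}\cap B(x_0,r_0)$, with constant density $\mathfrak{v}_\star(x_0)$. By the results of~\cite{CDS1}, in a neighbourhood of $x_0$ away from $\spt\mu_\star$ we have $\Q_\eps\to\Q_\star$ locally uniformly, with $\abs{\Q_\star}=1$ and $\Q_\star$ smooth (harmonic). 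Hence the wells $\M_\pm=\pm(1+\sqrt2\beta\abs{\Q_\eps})^{1/2}\n_\eps$ converge uniformly to the fixed unit-direction wells $\pm(1+\sqrt2\beta)^{1/2}\n_\star(x)$. By Theorem~\ref{thm:B-CDS}\ref{item:unif-conv-M}, on each side of the segment $\M_\eps$ converges uniformly to one of these wells.

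The computation that fixes the normalisation is the one-dimensional one. For $\abs{\Q}=1$ we have $V(\M)=\frac14(\abs{\M}^2-1)^2-\beta\Q\M\cdot\M+\frac12(\beta^2+\sqrt2\beta)$. Write $\M=a\n+b\n^\perp$ and use $\Q\M\cdot\M=\frac{1}{\sqrt2}(a^2-b^2)$. Along the line $b=0$ one finds $V=\frac14(a^2-1-\sqrt2\beta)^2$, whose heteroclinic cost is $\int\sqrt{2V}\,\d a=\sigma_\beta$ (up to the factor convention). The other candidate heteroclinics rotate through the $\n^\perp$ direction, and this is where $\sqrt2\beta>1$ enters. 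I will show that for $\sqrt2\beta>1$ every path connecting $\M_+$ to $\M_-$ has geodesic length in the degenerate metric $\sqrt{2V}\abs{\d\M}$ of at least $\sigma_\beta$, and that the straight path through $0$ is the unique minimiser. In particular no "rotating" connection is cheaper. The origin is a saddle, with $\partial_b^2V(0)=-1+\sqrt2\beta>0$ in the $\n^\perp$ direction. Without the threshold, a cheaper rotating path would give non-integral multiplicities, as in~\cite{B-Cetraro}.

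The main step, which I expect to be the obstacle, is to show that the energy in a thin strip transverse to the segment is asymptotically an integer multiple of $\sigma_\beta$. I would follow Bethuel's approach in~\cite{B} as adapted in~\cite{CDS2}. First, use the stationarity and constant density to prove equipartition: the discrepancy and the tangential derivative $\eps\abs{\partial_1\M_\eps}^2$ vanish in $L^1$ near $x_0$. Then, for almost every vertical line $x_1=t$, the restriction $s\mapsto\M_\eps(t,s)$ has energy converging to $\mathfrak{v}_\star(x_0)$ and almost-equipartitioned. The Modica--Mortola inequality $\int(\frac\eps2\abs{\M'}^2+\frac1\eps V)\ge\int\sqrt{2V}\abs{\M'}$ then bounds the line energy below by a sum of geodesic distances. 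Concretely, I would split each transverse segment at the points where $\M_\eps$ enters small neighbourhoods of the wells, so that each excursion between neighbourhoods costs at least $\sigma_\beta-o(1)$, while staying near a well costs at least a fixed $\delta>0$ unless it is trivial. A well-to-same-well excursion should cost at least $2\sigma_\beta$ or something above a gap: here I need a lower bound, requiring $\sqrt2\beta>1$, showing that any nontrivial excursion from the wells costs at least $\sigma_\beta$. Combined with equipartition, which forces each transition to be nearly a geodesic, this yields energy $=k\sigma_\beta+o(1)$.

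Finally, density of the quantised energies gives $\mathfrak{v}_\star(x_0)\in\sigma_\beta\mathbb{N}$ for $\H^1$-a.e. $x_0$. By local constancy of the density on segments (Theorem~\ref{thm:B-CDS}\ref{item:segments}), this holds everywhere on $\mathfrak{S}_\star$. The potential part $\zeta_\star$ is half of the total by equipartition, so the factor relating it to the total energy must be tracked to match the normalisation in~\eqref{eq:sigma-beta}.
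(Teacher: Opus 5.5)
\textbf{There is a genuine gap, and it lies in the step you flag as the obstacle.}

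Modica--Mortola plus equipartition only tells you that the energy on a transverse slice is asymptotically the degenerate-metric length $\int\sqrt{2V(\M_\eps)}\,\abs{\partial_2\M_\eps}\,\d s$ of the curve $s\mapsto\M_\eps(t,s)$. That length is not quantised. A curve may leave $\M_+$, travel part of the way toward the origin (or swing through the $\n^\perp$ direction), and come back; its cost can then be any number in an interval. This already happens in the scalar case. Equipartition does not force a transition to be a geodesic. So your argument yields only the lower bound $\mathfrak{v}_\star\ge\sigma_\beta\cdot(\text{number of crossings})$. For $\sqrt2\beta\ge1$ even that is immediate, since $h=h_1(u_1)+h_2(u_1,u_2)$ with $h_2\ge0$.

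To exclude partial excursions and non-geodesic transitions you need the equation, not the energy. Concretely, you would have to:
\begin{enumerate}
\item blow up at scale $\eps$;
\item classify all zero-Hamiltonian orbits of $\M''=\nabla V(\M)$ issuing from a well, including heteroclinics that are critical but not minimising and possible homoclinic loops;
\item run a Hutchinson--Tonegawa-type counting argument showing that no energy is stored between layers.
\end{enumerate}
None of this is available for the vector problem. That is exactly why the Bethuel/\cite{CDS2} machinery stops at rectifiability and locally constant density, and why integrality fails in general for vectorial Allen--Cahn. Your sentence that an excursion ``should cost at least $2\sigma_\beta$ or something above a gap'' therefore carries the whole content of the theorem. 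Even a gap would not give quantisation.

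\textbf{The missing idea} is to use $\sqrt2\beta>1$ as coercivity in the equation for $u_{2,\eps}=\M_\eps\cdot\m_\eps$, not for a geodesic comparison. The zeroth-order coefficient there is $\abs{\u}^2-1+\sqrt2\beta+\ldots\ge\sqrt2\beta-1>0$. The paper tests this equation and bootstraps through $L^2$ bounds on $\zeta^2\Delta\varphi_\eps$ and Lipschitz bounds on $\varphi_\eps$. This gives $\int(\eps\abs{\nabla u_{2,\eps}}^2+\eps^{-1}u_{2,\eps}^2)\lesssim\eps^\alpha$ for every $\alpha<2$ (Proposition~\ref{prop:decay-u2}), so the problem becomes quantitatively scalar.

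The paper also uses the $O(\eps)$ rate for $(\rho_\eps-1)/\eps-\kappa_\star$ from Lemma~\ref{lemma:improved-decay-rho}, which absorbs the $x$- and $\eps$-dependence of the wells. With both estimates, $\lambda_\eps u_{1,\eps}$ solves a scalar Allen--Cahn equation with forcing $F_\eps$ satisfying $\eps^{-1}\int_K F_\eps^2\le C$. Integrality is then imported from \cite{NagaseTonegawa}. Finally, one checks that $\zeta_\eps-\eps^{-1}h_1(u_{1,\eps})\to0$ in $L^1_{\rm loc}(\Omega\setminus\spt\mu_\star)$, so $\mathfrak{v}_\star$ coincides with the scalar density. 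Without this reduction, or a full vectorial quantisation theory, your plan does not reach the statement.
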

The condition $\sqrt{2}\beta > 1$ ensures such a fast, quantitative decay
with respect to $\eps$ of the 
component of $\M_\eps$ in the direction orthogonal to the eigenspace $\n_\eps$ of 
$\Q_\eps$ relative to its positive eigenvalue, that this component \emph{cannot} concentrate energy in the 
limit as $\eps \to 0$. 
This fast decay is obtained through the study of the equation satisfied by the orthogonal 
component $u_{2,\eps} := \M_\eps \cdot \n^{\perp}_\eps$ of $\M_\eps$, 
which undergoes a structural change exactly 
when $\sqrt{2}\beta > 1$, see~\eqref{eq:EL-u2} below.   
As a result, 
although the problem remains fully vectorial at the $\eps$-level, 
in the regime $\sqrt{2}\beta > 1$ it is \emph{quantitatively} close (in terms 
of $\eps$) to the  
scalar problem obtained by formally neglecting the component 
$u_{2,\eps}$ of $\M_\eps$. 
More precisely, this quantitive decay %of the orthogonal component of $\M$ 
allows us to look at the equation satisfied by   
$u_{1,\eps} := \M_\eps \cdot \n_\eps$ as a \emph{perturbed} Allen-Cahn equation of 
the type considered in~\cite{NagaseTonegawa} and, thus, to infer integrality from the 
results in~\cite{NagaseTonegawa}.

Theorem~\ref{mainthm:integrality} is, to the best of our knowledge, the first 
integrality result for the limiting varifold obtained in a vectorial Allen-Cahn problem, 
at least for a non-trivial (i.e., not decoupled already at the $\eps$-level), 
physically relevant potential energy, and for non-minimising solutions. 
In particular, it shows that integrality can occur 
if the potential energy singles out a preferred direction, as it could be 
intuitively expected. On the other 
hand, even in this seemingly easy setting, 
proving that the components along the orthogonal directions decay sufficiently 
fast turns out to be a quite technical matter. 

It would be 
interesting to further study the behaviour of the multiplicity function 
$\mathfrak{v}_\star$ in the opposite regime $\sqrt{2}\beta \leq 1$. 
Particularly 
intriguing would be a situation in which integrality does not hold in such a regime 
(or ceases to be true below a certain threshold). 
Indeed, besides the specific interest of such a deep structural change in this 
particular problem, it is conceivable that the phenomena leading to the breakdown 
and the arguments involved in proving its occurrence 
could be helpful to the long-term goal of determining  
%developments of methods that, hopefully, would lead in future 
%to single out 
sufficient conditions for integrality in the vectorial Allen-Cahn problem. 
%This would be relevant to the analysis of the important applications in which 
%vectorial Allen-Cahn functionals are involved.

\subsection*{Plan of the paper}
In Section~\ref{sec:preliminary}, we gather some results from~\cite{CDS1, CDS2}, 
as needed in this paper. In Section~\ref{sec:conv-rho-eps} we obtain a quantitative 
rate with respect 
to $\eps$ for the convergence of $\abs{\Q_\eps}$ to zero in 
$W^{1,2}_{\rm loc}(\Omega \setminus \spt\mu_\star)$
and of $\frac{\abs{\Q_\eps} - 1}{\eps}$ to the constant 
$\kappa_\star$ defined in~\eqref{eq:kappa*} in 
$L^2_{\rm loc}(\Omega \setminus \spt\mu_\star)$ (see Lemma~\ref{lemma:improved-decay-rho}). This result   
is crucial to our purposes and improves on~\cite[Lemma~3.7]{CDS1} 
(which, in turn, was a crucial ingredient in~\cite{CDS1,CDS2}). 
In Section~\ref{sec:decay-u2}, we show a quantitative decay with respect 
to $\eps$ of the $W^{1,2}$-norm of $u_{2,\eps}$ away from $\spt\mu_\star$. 
Finally, in Section~\ref{sec:integrality}, we employ the results of Section~\ref{sec:decay-u2} and Lemma~\ref{lemma:improved-decay-rho} 
to prove Theorem~\ref{mainthm:integrality}. 

%----------------------------
\section{Preliminary results}\label{sec:preliminary}

\setcounter{equation}{0}
\setcounter{theorem}{0}
\numberwithin{equation}{section}
\numberwithin{definition}{section}
\numberwithin{theorem}{section}
\numberwithin{remark}{section}
\numberwithin{example}{section}

The main result of this section is Lemma~\ref{lemma:improved-decay-rho}, 
contained in Section~\ref{sec:conv-rho-eps}. 
We will use it both in Section~\ref{sec:decay-u2} and also directly in 
the proof of Theorem~\ref{mainthm:integrality}. Before arriving to 
Lemma~\ref{lemma:improved-decay-rho}, it will be necessary to establish 
some notation and gather some auxiliary results from~\cite{CDS1, CDS2}, which 
will be needed also in the rest of the paper. In particular, 
in Section~\ref{sec:ACeps} we define the auxiliary Allen-Cahn 
energy $\mathcal{AC}_\eps$. Although this functional was already considered 
in~\cite{CMSW, CDS1}, here we introduce the new decomposition~\eqref{eq:h1-h2} 
of its potential energy $h$ in~\eqref{eq:h}, which will prove to be extremely 
useful to our purposes.

\subsection{Varifolds}
We recall some basic terminology, as needed in this paper, 
concerning the theory of varifolds. For a detailed account 
of the theory, the reader is referred to~\cite{Simon}.

\vskip5pt

A $k$-varifold~$\mathbb{V}$ 
on an open set $U \subset \R^n$ is a Radon measure 
on $G_k(U) := U \times G(n,\,k)$, where $G(n,\,k)$ 
denotes the Grassmanian manifold of $k$-dimensional 
planes in $\R^n$. The \emph{weight measure} $\norm{\mathbb{V}}$ of $\mathbb{V}$ 
is the Radon measure defined by setting
\[
	\int_U \phi(x) \,{\d} \norm{\mathbb{V}}(x) 
	:= \int_{G_k(U)} \phi(x)\,{\d}\mathbb{V}(x,\,S),
\]
for all $\phi \in C_c(U)$. 
A $k$-varifold is \emph{$\H^k$-rectifiable}
%(or \emph{$k$-rectifiable}, for short)
if there exist a %$\H^k$-measurable, 
countably $\H^k$-rectifiable set $\mathfrak{S}$ and
%an $\H^k$-measurable, 
a locally $\H^k$-integrable function
$\theta\colon\mathfrak{S} \to \R^+$ (called \emph{density function} 
or \emph{multiplicity}) such that 
\[
	\int_{G_k(U)} \varphi(x,\,S) \,{\d} \mathbb{V}(x,\,S) = 
	\int_{\mathfrak{S}} \varphi(x,\,\T_x \mathfrak{S})\,\theta(x) \,{\d}\H^k(x)
\]
for all $\varphi \in C_c(G_k(U))$, where $\T_x \mathfrak{S}$ denotes 
the approximate tangent space to $\mathfrak{S}$ at $x \in \mathfrak{S}$. 
Following the common convention, we extend $\theta$ as $0$ in $\R^n \setminus \mathfrak{S}$.  
Thus, if $\mathbb{V}$ is $\H^k$-rectifiable, then 
\[
	\norm{\mathbb{V}} = \theta \H^k \mres \mathfrak{S}.
\]
If $\theta$ takes only integer values, then $\mathbb{V}$ is called 
\emph{integral}.

Two pairs $(\mathfrak{S},\,\theta)$ and 
$(\mathfrak{S}',\,\theta')$ identify the same $\H^k$-rectifiable varifold 
if and only if the symmetric difference $\mathfrak{S} \Delta \mathfrak{S}'$ 
is a $\H^k$-null set and $\theta = \theta'$ on~$\H^k$-almost all of 
$\mathfrak{S} \cap \mathfrak{S}'$. 
These conditions define an equivalence relation, 
so that rectifiable varifolds are actually equivalence classes of 
pairs $(\mathfrak{S},\,\theta)$. 
% To emphasise the pair $(\mathfrak{S},\,\theta)$, it is customary to write
We will write $\mathbb{V} = \v(\mathfrak{S}, \, \theta)$
for the rectifiable varifold carried by~$\mathfrak{S}$
with density function~$\theta$
(see e.g.~\cite[Chapter~4]{Simon}).
The \emph{first variation} of a varifold~$\mathbb{V}$ 
is the distribution $\delta \mathbb{V}$ in $U$ satisfying
\[
	\delta\mathbb{V}(\X) := 
	\int_{\mathfrak{S}} \div_{\T_x \mathfrak{S}} \X(x) \,{\d} \H^k(x)
\]
for all $\X \in C^1_c(U,\,\R^n)$. If it happens that $\delta{\mathbb{V}}(\X) = 0$ 
for any $\X \in C^1_c(U,\,\R^n)$, then $\mathbb{V}$ is said to be \emph{stationary} 
in $U$.

%------------------------------------------------
\subsection{The zero set of the potential energy and the material constant $\kappa_\star$}

It can be checked (see \cite[Lemma~B.2]{CMSW}) that,  
for any $\eps > 0$, we have $f_\eps(\Q,\,\M) = 0$ if and only 
if $(\Q,\,\M) = (\Q^{\rm pot}_\eps,\,\M^{\rm pot}_\eps)$, where 
$\Q^{\rm pot}_\eps,\,\M^{\rm pot}_\eps$ satisfy   
\[
	\abs{\M^{\rm pot}_\eps} = \lambda_{\eps,\beta},  \qquad 
	\Q^{\rm pot}_\eps = \sqrt{2} s_{\eps,\beta}\left( \frac{\M^{\rm pot}_\eps \otimes \M^{\rm pot}_\eps}{\lambda_{\eps,\beta}^2} - \frac{\I}{2} \right), 
\]
where
\[
	s_{\eps,\beta} = 1 + \eps \kappa_\star + {\rm O}_{\eps \to 0}(\eps^2), 
	\qquad 
	\lambda_{\eps,\beta}^2 = 1 + \sqrt{2}\beta + \sqrt{2}\beta \eps \kappa_\star + {\rm O}_{\eps \to 0}(\eps^2). 
\]
Notice that
\[
	s_{\eps,\beta} \to 1, \qquad 
	\lambda_{\eps,\beta} \to \left( 1 + \sqrt{2}\beta \right)^{1/2} 
	\qquad \mbox{as } \eps \to 0. 
\]
Here, 
\begin{equation}\label{eq:kappa*}
	\kappa_\star := \frac{\beta}{2\sqrt{2}}\left( 1 + \sqrt{2}\beta \right)
\end{equation}
is a material-dependent, dimensionless constant, measuring the strength of the coupling 
between $\Q$ and $\M$ in terms of how much the interaction lifts the ground level 
of the potential at first order in $\eps$ from what it would be with the interaction  switched off.

Given boundary data as in~\eqref{hp:bc-dir} or as in~\eqref{hp:bc-mixed}, 
one easily computes that
\[
	\frac{1}{\eps^2}f_\eps(\Qb,\,\Mb) 
	= \kappa_\star^2 + \o_{\eps \to 0}(\eps^2).
\]

%-------------------------------------------------
\subsection{General properties of critical points}
By standard elliptic regularity,
any critical pair $(\Q_\eps,\,\M_\eps)$ of $\F_\eps$ is 
real-analytic in $\Omega$ and, under boundary conditions 
as in~\eqref{hp:bc-dir} or~\eqref{hp:bc-mixed}, at least 
of class $C^1$ in $\overline{\Omega}$.
Moreover, from \cite[Lemma~1.6]{CDS1} we have
\begin{align}
    & \norm{\Q_\eps}_{L^\infty(\Omega)} + \norm{\M_\eps}_{L^\infty(\Omega)} \leq C_\beta \label{eq:max-QM} \\
    & \norm{\nabla \Q_\eps}_{L^\infty(\Omega)} + \norm{\M_\eps}_{L^\infty(\Omega)} \leq \frac{C_{\beta,\Omega}}{\eps}\label{eq:max-grad-QM},
\end{align}
where the constant $C_\beta$ depends only on $\beta$ and 
the constant $C_{\beta,\Omega}$ depends only on $\beta$ and $\Omega$. 
More precisely, the proof of~\cite[Lemma~1.6]{CDS1} shows that, 
for any $\eps > 0$, the pointwise inequalities
\begin{align}
    \abs{\Q_\eps} &\leq 1 + \eps \kappa_\star + c_\beta \eps^2, \label{eq:max-princ-rho} \\
    \abs{\M_\eps}^2 &\leq 1 + \sqrt{2}\beta \abs{\Q_\eps} \label{eq:max-princ-M},
\end{align}
where $c_\beta$ depends only on $\beta$, hold globally in $\Omega$, cf.~\cite[Lemma~1.6 and Remark~1.7]{CDS1}.

\subsection{Decomposition of $\Q$-tensors}\label{sec:dec-Q}
Assume that $G \subseteq \Omega$ is an open, bounded, 
and simply connected subset of $\Omega$, with smooth 
boundary $\partial G$. Assume that $\Q : G \to \Sz$ 
is a smooth $\Q$-tensor field such that $\Q(x) \neq 0$ 
for all $x \in G$. Then, by the spectral theorem and 
the simple connectedness of $G$ and well-known lifting 
results (cf.~\cite{CDS1} and references therein), we can find a coherently 
defined and smoothly varying eigenframe of smooth unit 
eigenvectors for $\Q$, denoted $(\n,\,\m)$, so that we may write
\begin{equation}\label{eq:spectral-thm}
    \Q = \frac{\abs{\Q}}{\sqrt{2}} \left( \n \otimes \n - \m \otimes \m \right) 
    \qquad \mbox{in } G.
\end{equation}
We will often use the following notation:
\[
    \rho := \abs{\Q}.
\]
We always assume that $\n$ denotes a unit eigenvector relative to the \emph{positive} eigenvalue of $\Q$.

Also, there exists a function $\varphi : G \to \R$ such that
\begin{equation}\label{eq:neps-phieps-1}
    \n  = 
    \begin{pmatrix}
        \cos\varphi \\ 
        \sin\varphi
    \end{pmatrix}.
\end{equation}
The following relations are immediately checked by straightforward computation:
\begin{equation}\label{eq:neps-phieps-2}
    \abs{\nabla \n} = \abs{\n\times \nabla \n} = \abs{\nabla \varphi}, \qquad 
    \n \times \Delta \n = \Delta \varphi.
\end{equation}
Since $\m$ is either $\n^\perp = \begin{pmatrix} -n_2 \\ n_1 \end{pmatrix}$ or $-\n^\perp$, the same relations as in~\eqref{eq:neps-phieps-2} hold with $\n$ replaced by $\m$. Furthermore, by direct computation it is easily seen that 
\begin{equation}\label{eq:dec-grad-Q-square}
	\abs{\nabla \Q_\eps}^2 = \abs{\nabla \rho_\eps}^2 + 4 \rho^2 \abs{\nabla \varphi}^2.
\end{equation}
pointwise.

\subsection{Convergence results for $\Q_\eps$}\label{sec:Qeps}
Let $\{(\Q_\eps,\,\M_\eps)\}_\eps$ be a sequence of critical points of $\F_\eps$ 
subject to boundary conditions as in~\eqref{hp:bc-dir} or~\eqref{hp:bc-mixed} and 
satisfying~\eqref{hp:equibdd-f}. 
By \cite[Proposition~2.5 and Lemma~3.3]{CDS1}, there exists a positive number 
$\eps_*$ such that, if $B = B(x_0, \, R) \csubset \Omega \setminus \spt\mu_\star$, 
then $\abs{\Q_\eps} \geq 1/2$ in $B(x_0,\,3R/4)$. 
Consequently, in $B$ we can decompose $\Q_\eps$ 
as in~\eqref{eq:spectral-thm}, write $\n_\eps$ as in~\eqref{eq:neps-phieps-1}, 
and, for each $\eps \leq \eps_* R$, we can find a function 
$\varphi_\eps : B(x_0,\,3R/4) \to \R$ satisfying~\eqref{eq:neps-phieps-1}. 
By \cite[Step~2 of Proposition~2.5]{CDS1}, we know that
there exists a 
constant $C_p(x_0,\,R)$, depending only on $\beta$, $\Omega$, $p$, $x_0$, $R$, 
${\rm C}_{\rm pot}$, and the $L_1(\partial\Omega)$- and the $L^2(\partial\Omega)$-norm
of $\Qb \times \partial_{\ttau}\Qb$, such that 
%(cf.~\cite[inequality (2.39) in the proof of Proposition~2.5]{CDS1})
\begin{equation}\label{eq:varphi-Lp-bounds}
    \forall p \in [1,\,\infty), \qquad 
    \norm{\nabla \varphi_\eps}_{L^p(B(x_0,\,R/2))} \leq C_{p}(x_0,\,R).
\end{equation}
By a standard covering argument, this implies that 
for every compact set $K \subset \Omega \setminus \spt\mu_\star$ 
there exists $\eps_K >0$ such that, for any $\eps \leq \eps_K$, 
there holds $\abs{\Q_\eps} \geq 1/2$ in $K$. In particular, 
both the considerations in Section~\ref{sec:dec-Q} and those above are valid in 
any simply connected, open subset $G \csubset K$ with smooth boundary, 
for any $\eps \leq \eps_K$. 
%Consequently, we can decompose $\Q_\eps$ 
%as in~\eqref{eq:spectral-thm}, write $\n_\eps$ as in~\eqref{eq:neps-phieps-1}, 
%and, for each $\eps \leq \eps_K$, we can find a function 
%$\varphi_\eps : G \to \R$ satisfying~\eqref{eq:neps-phieps-1}. 
%By \cite[Step~2 of Proposition~2.5]{CDS1}, we know that 
%\begin{equation}\label{eq:varphi-Lp-bounds-K}
%    \forall p \in [1,\,\infty), \qquad 
%    \norm{\nabla \varphi_\eps}_{L^p(K)} \leq C_{p}(K),
%\end{equation}
%where the constant $C_{p}(K)$ does not depend on $\eps$, 
%but only on $\beta$, $\Omega$, $p$, $K$, 
%${\rm C}_{\rm pot}$, and the $L_1(\partial\Omega)$- and the $L^2(\partial\Omega)$-norm
%of $\Qb \times \partial_{\ttau}\Qb$. 

Regarding $\rho_\eps$, we observe that it satisfies the 
equation
\begin{equation}\label{eq:rho-eps}
	-\frac{1}{2} \Delta \rho_\eps^2 + \abs{\nabla \Q_\eps}^2 + 
	\frac{1}{\eps^2}\left(\rho_\eps^2 - 1 \right) \rho_\eps^2 
	- \frac{\beta}{\eps} \Q \M \cdot \M = 0,   
\end{equation}
which is obtained by scalar multiplication of~\eqref{eq:EL-Q} 
with $\Q_\eps$. If $G \subset \Omega$ is an open set such that 
$\rho_\eps > 0$ in $G$, then, upon using~\eqref{eq:dec-grad-Q-square} 
and dividing both sides by $\rho_\eps$, 
we can recast~\eqref{eq:rho-eps} as
\begin{equation}\label{eq:rho-eps-bis}
	-\Delta \rho_\eps + 4 \rho^2 \abs{\nabla \varphi}^2 + 
	\frac{1}{\eps^2}\left(\rho_\eps^2 - 1 \right) \rho_\eps^2 
	- \frac{\beta}{\eps} \Q \M \cdot \M = 0 \qquad \mbox{in } G.
\end{equation} 
By using~\eqref{eq:varphi-Lp-bounds} and testing suitably~\eqref{eq:rho-eps-bis}, 
it has been shown in \cite[Proposition~2.6]{CDS1} that 
\begin{equation}\label{eq:GL-bounds-p}
	\norm{\nabla \Q_\eps}_{L^p(B(x_0,\,R/2))} + \norm{\frac{\rho_\eps - 1}{\eps}}_{L^p(B(x_0,\,R))} \leq C_{p}(x_0,\,R),
\end{equation}
for a constant $C_p(x_0,\,R)$ depending only on 
$\beta$, $\Omega$, $p$, $x_0$, $R$, 
${\rm C}_{\rm pot}$, and the $L_1(\partial\Omega)$- and the $L^2(\partial\Omega)$-norm
of $\Qb \times \partial_{\ttau}\Qb$.
%Moreover, by covering and recalling \cite[Remark~2.7]{CDS1}, for every 
%compact set $K \subset \Omega \setminus \spt\mu_\star$ and every 
%$p \in [1,+\infty)$ there exists a positive constant $C_{p}(K)$, 
%depending only on $\beta$, $\Omega$, $p$, $K$, 
%${\rm C}_{\rm pot}$, and the $L_1(\partial\Omega)$- and the $L^2(\partial\Omega)$-norm
%of $\Qb \times \partial_{\ttau}\Qb$, such that
%\begin{equation}\label{eq:GL-bounds-p}
%	\norm{\nabla \Q_\eps}_{L^p(K)} + \norm{\frac{\rho_\eps - 1}{\eps}}_{L^p(K)} 
%	\leq C_{p}(K).
%\end{equation}
Furthermore, it has been shown in \cite[Lemma~2.8]{CDS1} that 
\begin{equation}\label{eq:rho-unif-conv}
    \rho_\eps \to 1 \qquad \mbox{uniformly in } B.
\end{equation}
for every ball $B \csubset \Omega \setminus \spt\mu_\star$ 
and hence, by covering, in every compact set $K \subset \Omega\setminus \spt\mu_\star$. 
In addition, by \cite[Lemma~3.7]{CDS1}, we have  
\begin{equation}\label{eq:strong-conv-rho}
    \int_B \left\{\abs{\nabla \rho_\eps}^2 + \left(\frac{\rho_\eps - 1}{\eps} - \kappa_\star\right)^2\right\}\,{\d}x \to 0 \qquad \mbox{as } \eps \to 0
\end{equation}
for any ball $B \csubset \Omega \setminus \spt \mu_\star$. 
Along with the uniform bounds in \cite[Proposition~2.6]{CDS1},~\eqref{eq:strong-conv-rho} 
implies that for every $p \in [1, +\infty)$ we have (cf.~\cite[Remark~3.6]{CDS1})
\begin{equation}\label{eq:strong-conv-rho-p}
    \int_B \left\{\abs{\nabla \rho_\eps}^p + \abs{\frac{\rho_\eps - 1}{\eps} - \kappa_\star}^p\right\}\,{\d}x \to 0 \qquad \mbox{as } \eps \to 0, 
\end{equation}
for every $p \in [1,\,+\infty)$ and every ball $B \csubset \Omega \setminus \spt \mu_\star$. 

Among the most important consequences of~\eqref{eq:strong-conv-rho-p}, there is 
the fact, crucially exploited in~\cite{CDS2}, that the limiting potential 
energy density $\zeta_\star$ defined in~\eqref{eq:zeta*}   
contains, at first order, all the asymptotic information about the full potential 
energy density $\frac{1}{\eps^2} f_\eps(\Q_\eps,\,\M_\eps)$, in the sense 
that, for any $p \in [1,\,+\infty)$, there holds
\begin{equation}\label{eq:f-V-p}
	\int_K \abs{\frac{1}{\eps^2}f_\eps(\Q_\eps,\,\M_\eps) - \frac{1}{\eps}V(\M_\eps)}^p \,{\d}x \to 0, \qquad \mbox{as } \eps \to 0.
\end{equation}
Lemma~\ref{lemma:improved-decay-rho} below improves on~\eqref{eq:strong-conv-rho},   
so as to promote the above convergence to 
a \emph{quantitative decay} with respect to $\eps$.

%---------------------------------------
\subsection{Decomposition of $\M$ along an eigenframe of $\Q$: the map $\u$}

Let $G \subseteq \Omega$ be a simply connected, open set with smooth boundary $\partial G$. Assume that $\Q : G \to \Sz$ is a $\Q$-tensor field such that $\Q \neq 0$ in $G$. Let $(\n,\,\m)$ denote any eigenframe associated with $\Q$ as in~\eqref{eq:spectral-thm}. 
Then, we can project $\M$ along the eigenframe $(\n,\,\m)$, obtaining the map $\u : G \to \R^2$ defined by
\begin{equation}\label{eq:u}
   \u := \left(\M\cdot \n,\,\M\cdot \m\right). 
\end{equation}
We set
\begin{equation}\label{eq:u1u2}
    u_1 := \M \cdot \n, \quad 
    u_2 := \M \cdot \m
\end{equation}
and notice that
\begin{equation}\label{eq:mod-u}
    \abs{\u} = \abs{\M}
\end{equation}
pointwise in $G$. In particular, by~\eqref{eq:mod-u} and~\eqref{eq:max-QM}, 
\begin{equation}\label{eq:norm-u}
    \norm{\u}_{L^\infty(G)} \leq \norm{\M}_{L^\infty(\Omega)} \leq C_\beta.
\end{equation}
We also observe that, if $\Q$, $\M \in C^\infty(G)$, then $\u \in C^\infty(G)$. 
For later purposes, we explicitly observe that if $\Q \in \Sz \setminus \{0\}$, 
then
\[
	\Q \M \cdot \M = \frac{\rho}{\sqrt{2}}\left( (\M \cdot \n)^2 - (\M \cdot \m)^2 \right), 
\]
i.e.,
\begin{equation}\label{eq:QMM-u}
	\Q \M \cdot \M = \frac{\rho}{\sqrt{2}}\left( u_1^2 - u_2^2 \right)
\end{equation}

%---------------------------------------------------------------
\subsection{The auxiliary energy functional $\mathcal{AC}_\eps$}
\label{sec:ACeps}
Let $\{(\Q_\eps,\,\M_\eps)\}_\eps$ be a sequence of critical points 
of $\F_\eps$ satisfying~\eqref{hp:equibdd-f}. 
Let $K \subset \Omega \setminus \spt \mu_\star$ be any compact set, 
and let $G \subset K$ be any simply connected, open set, with smooth boundary. 
As already seen above, there exists $\eps_K$ so that, 
for any $\eps \leq \eps_K$, there holds $\abs{\Q_\eps} \geq 1/2$ in $K$. 
Thus, for each $\eps \leq \eps_K$, from $\M_\eps$ and $\n_\eps$, we can construct a map $\u_\eps : G \to \R^2$ as in~\eqref{eq:u}. Upon defining 
\begin{equation}\label{eq:ACeps}
    \mathcal{AC}_\eps(\u_\eps;\,G) := \int_G \left\{ \frac{\eps}{2}\abs{\nabla \u_\eps}^2 + \frac{1}{\eps} h(\u_\eps) \right\}\,{\d}x,
\end{equation}
where
\begin{equation}\label{eq:h}
    h(\u) = h(u_1,\,u_2)  := \frac{1}{4}\left( \abs{\u}^2 - 1 \right)^2 - \frac{\beta}{\sqrt{2}}\left( u_1^2 - u_2^2 \right) + \frac{\beta^2 + \sqrt{2}\beta}{2},
\end{equation}
we can rewrite (cf., e.g.,~\cite[Proposition~3.2]{CMSW})
\[
    \F_\eps(\Q_\eps,\,\M_\eps;\,G) = 
    \int_G \left\{ \frac{1}{2} \abs{\nabla \Q_\eps}^2 + \frac{1}{4\eps^2}\left( \abs{\Q_\eps}^2 - 1  \right)^2\right\}\,{\d}x 
    + \mathcal{AC_\eps}(\u_\eps;\,G) 
    + R_\eps,
\]
where
\[
	R_\eps \to 0 \qquad \mbox{as } \eps \to 0.
\]
By~\cite[Lemma~3.2]{CDS1}, we know that
\begin{equation}\label{eq:ACeps-equibdd}
    \sup_{\eps > 0} \mathcal{AC}_\eps(\u_\eps;\,G) \leq C < +\infty,
\end{equation}
where the constant $C$ does not depend on $\eps$ and on $G$, but 
only on $\beta$, $\Omega$, $\Cpot$, and the $L^1(\partial\Omega)$- 
and the $L^2(\partial\Omega)$-norm of $\Qb \times \partial_\ttau \Qb$. 
Since, as it can be easily checked,  
\[
    h(\u) = h(\abs{\u},\,0) + \sqrt{2}\beta u_2^2, \qquad h(\abs{\u},\,0) \geq 0, 
\]
for any vector $\u = (u_1,\,u_2) \in \R^2$, 
an immediate consequence of~\eqref{eq:ACeps-equibdd} is the bound
\begin{equation}\label{eq:decay-u2-basic}
    \frac{1}{\eps}\int_G u_{2,\eps}^2 \,{\d}x \leq C_2,
\end{equation}
where the constant $C_2$ does not depend on $\eps$ and on $G$, but 
only on $\beta$, $\Omega$, $\Cpot$, and the $L^1(\partial\Omega)$- 
and the $L^2(\partial\Omega)$-norm of $\Qb \times \partial_\ttau \Qb$. 

We point out some additional remarks on the function $h$. 
By straightforward computations, we see that $h(\u)$ can 
be rewritten in terms of the components $u_1$, $u_2$ in the following 
ways (both of them will be used later on): 
\begin{equation}\label{eq:h-ter}
	h(\u) = \frac{1}{4}\left( \abs{\u}^2 - 1 - \sqrt{2}\beta \right)^2 + \sqrt{2}\beta u_2^2
\end{equation}
and also
\begin{equation}\label{eq:h-bis}
    h(u_1,\,u_2) = \frac{1}{4}\left(  u_1^2 - 1 - \sqrt{2} \beta \right)^2 + u_2^2 \left( \frac{1}{2}\left( u_1^2 - 1 + \sqrt{2} \beta \ \right) + \frac{1}{4}u_2^2 \right).
\end{equation}
For $u$, $v \in \R$, we define
\begin{equation}\label{eq:h1-h2}
    h_1(u) := \frac{1}{4}\left( u^2 - 1- \sqrt{2}\beta \right)^2, \qquad 
    h_2(u,\,v) := v^2\left( \frac{1}{2}\left(u^2 - 1 + \sqrt{2}\beta \right) + \frac{1}{4}v^2 \right),
\end{equation}
so that for any $\u = (u_1,\,u_2) \in \R^2$ we have
\[
    h(\u) = h_1(u_1) + h_2(u_1,\,u_2).
\]
If $\sqrt{2}\beta \geq 1$, then the function $h_2$ is non-negative, 
for every $u$, $v \in \R$.
Thus, if $\sqrt{2}\beta \geq 1$, then $h$ is the sum of two positive terms, one of which involving only $u_1$.
Moreover, besides~\eqref{eq:decay-u2-basic}, we have other 
immediate consequences of~\eqref{eq:ACeps-equibdd}, that is, 
\begin{gather}
	\frac{1}{\eps} \int_G \left(  u_1^2 - 1 - \sqrt{2} \beta \right)^2 \,{\d}x \leq C_1, \label{eq:decay-u1-to-well} \\
	\frac{1}{\eps} \int_G u_2^4 \,{\d}x \leq C_2. \label{eq:decay-u2^4-basic}
\end{gather}
For later use, we also observe that, by the definition~\eqref{eq:V} of $V(\M_\eps)$, the 
definition~\eqref{eq:h} of $h(\u)$, and elementary computations, 
it follows that (cf. \cite[Lemma~2.1]{CDS2})
\begin{equation}\label{eq:V-h}
	V(\M) - h(\u) 
	= \frac{\beta}{\sqrt{2}}\left(1 - \abs{\Q} \right)\left(u_1^2 - u_2^2 - \frac{\beta + \beta\abs{\Q}}{\sqrt{2}} \right).
\end{equation}
If $(\Q_\eps,\,\M_\eps)$ is a critical pair for $\F_\eps$, 
then the equation satisfied by $\u_\eps$ in $G$ reads as follows:
\begin{equation}\label{eq:EL-u}
    -\Delta \u_\eps + \frac{1}{\eps^2}\left(\abs{\u_\eps}^2-1\right) \u_\eps - \frac{\sqrt{2}\beta \abs{\Q_\eps}}{\eps^2} \bar{\u}_\eps = \u_\eps \abs{\nabla \varphi_\eps}^2 - \nabla \u_\eps^\perp \cdot \nabla \varphi_\eps - \u_\eps^\perp \Delta \varphi_\eps \qquad \mbox{in } G,
\end{equation}
where we used the notation
\[
   \u^\perp = 
   \begin{pmatrix}
    -u_2 \\
     u_1
   \end{pmatrix}, \qquad 
   \bar{\u} = 
      \begin{pmatrix}
     u_1 \\
     -u_2
   \end{pmatrix}.
\]
Equation~\eqref{eq:EL-u} is obtained by rewriting~\eqref{eq:EL-M} in terms of $\u$. 
Note, in particular, that~\eqref{eq:EL-u} is \emph{not} the Euler-Lagrange 
equation associated with the functional $\mathcal{AC}_\eps$. 
In fact, it differs from it because the right-hand side of~\eqref{eq:EL-u} 
is not zero (rather, it contains the interaction terms with the $\Q_\eps$-components).

%----------------------------------------------------------------
\subsection{Quantitative convergence for the modulus of ${\Q_\eps}$}
\label{sec:conv-rho-eps}

For our purposes in this paper, we need an improved version of~\eqref{eq:strong-conv-rho}, 
which is contained in Lemma~\ref{lemma:improved-decay-rho} below. 
\begin{lemma}\label{lemma:improved-decay-rho}
    For every compact set $K \subset \Omega \setminus \spt\mu_\star$, 
    there holds
    \begin{equation}\label{eq:improved-decay-rho}
        \int_K \left\{ \abs{\nabla \rho_\eps}^2 + \left(\frac{\rho_\eps -1}{\eps} - \kappa_\star \right)^2 \right\}\,{\d}x \leq C_{\beta,K} \eps \qquad \mbox{as } \eps \to 0.
    \end{equation}
The constant $C_{\beta,K}$ depends only on $\beta$, $K$, $\Omega$, $\Cpot$, and 
the $L^1(\partial\Omega)$- and the $L^2(\partial\Omega)$-norm of 
$\Qb \times \partial_{\ttau} \Qb$.
\end{lemma}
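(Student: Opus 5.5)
Set $w_\eps := \frac{\rho_\eps - 1}{\eps} - \kappa_\star$. The idea is to test the equation~\eqref{eq:rho-eps-bis} for $\rho_\eps$ against $\psi^2 w_\eps$, where $\psi$ is a cutoff with $\psi\equiv1$ on $K$ and support in a slightly larger compact set $K' \subset \Omega\setminus\spt\mu_\star$. We then show that every error term is $O(\eps)$, rather than merely $o(1)$ as in~\cite[Lemma~3.7]{CDS1}. By covering, it suffices to work on a ball $B$ with $B(x_0,2R)\csubset\Omega\setminus\spt\mu_\star$, on which $\rho_\eps\ge 1/2$ and all the bounds of Section~\ref{sec:Qeps} are available.

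\textbf{Expanding the reaction term.} Write $\rho_\eps = 1+\eps\kappa_\star+\eps w_\eps$. Then
\[
\frac{1}{\eps^2}(\rho_\eps^2-1)\rho_\eps^2 = \frac{2}{\eps}(\kappa_\star + w_\eps) + O\!\left(1+w_\eps^2\right),
\]
where the remainder is controlled in every $L^p$ by~\eqref{eq:GL-bounds-p}. By~\eqref{eq:QMM-u},
\[
\frac{\beta}{\eps}\Q_\eps\M_\eps\cdot\M_\eps = \frac{\beta\rho_\eps}{\sqrt2\,\eps}\,(u_1^2-u_2^2).
\]
Using $u_1^2 = (1+\sqrt2\beta) + (u_1^2-1-\sqrt2\beta)$ and the identity $2\kappa_\star = \frac{\beta}{\sqrt2}(1+\sqrt2\beta)$ from~\eqref{eq:kappa*}, the $O(1/\eps)$ constants cancel. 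Multiplying~\eqref{eq:rho-eps-bis} by $\eps/2$ then gives
\[
-\frac{\eps}{2}\Delta\rho_\eps + w_\eps = \frac{\beta}{2\sqrt2}\bigl[(u_1^2-1-\sqrt2\beta) - u_2^2\bigr] - 2\eps\rho_\eps^2\abs{\nabla\varphi_\eps}^2 + \eps\, O\!\left(1+w_\eps^2\right).
\]

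\textbf{Testing with $\psi^2 w_\eps$.} Since $\nabla\rho_\eps = \eps\nabla w_\eps$, the Laplacian term produces
\[
\frac{\eps^2}{2}\int\psi^2\abs{\nabla w_\eps}^2 = \frac12\int\psi^2\abs{\nabla\rho_\eps}^2
\]
plus a cutoff term $\eps\int\psi\,w_\eps\nabla\psi\cdot\nabla\rho_\eps$, which is absorbed by Young's inequality up to $O(\eps^2\int w_\eps^2)$. On the left we thus obtain $\frac12\int\psi^2\abs{\nabla\rho_\eps}^2 + \int\psi^2 w_\eps^2$. On the right we estimate term by term:
\begin{itemize}
\item[(a)] By Young's inequality and~\eqref{eq:decay-u1-to-well}, $\int\psi^2 w_\eps (u_1^2-1-\sqrt2\beta) \le \frac14\int\psi^2 w_\eps^2 + C\int (u_1^2-1-\sqrt2\beta)^2 \le \frac14\int\psi^2 w_\eps^2 + C\eps$.
\item[(b)] Similarly, $\int \psi^2 w_\eps\, u_2^2 \le \frac14\int\psi^2 w_\eps^2 + C\int u_2^4$, and~\eqref{eq:decay-u2^4-basic} bounds the last integral by $C\eps$.
\item[(c)] By~\eqref{eq:varphi-Lp-bounds} and~\eqref{eq:GL-bounds-p}, $\eps\int\psi^2\abs{w_\eps}\,\abs{\nabla\varphi_\eps}^2 \le C\eps\,\norm{w_\eps}_{L^2}\norm{\nabla\varphi_\eps}_{L^4}^2 \le C\eps$.
\item[(d)] By the $L^p$ bounds~\eqref{eq:GL-bounds-p}, $\eps\int\abs{w_\eps}(1+w_\eps^2)\le C\eps$.
\end{itemize}
Absorbing the $\frac14$-terms into the left-hand side yields $\int_K(\abs{\nabla\rho_\eps}^2 + w_\eps^2) \le C\eps$, which is~\eqref{eq:improved-decay-rho}.

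\textbf{Main obstacle.} The hardest step is the exact cancellation of the $1/\eps$ terms. It requires rewriting the coupling in terms of $u_1,u_2$ and matching the constant with $\kappa_\star$ via the well value $1+\sqrt2\beta$. The terms left after the cancellation must be precisely those controlled with an $\eps$-rate by the $\mathcal{AC}_\eps$ bounds~\eqref{eq:decay-u1-to-well} and~\eqref{eq:decay-u2^4-basic}.

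A secondary issue is that~\eqref{eq:decay-u2^4-basic} as written requires $h_2\ge0$, i.e.\ $\sqrt2\beta\ge1$. Without that assumption, I would bound $\int u_2^4 \le C\int u_2^2 \le C\eps$ using~\eqref{eq:norm-u} together with~\eqref{eq:decay-u2-basic}, which gives the same rate for every $\beta>0$. I would also double-check the $\beta$-dependence of the constant in the expansion of the reaction term against the value of $\kappa_\star$ in~\eqref{eq:kappa*}.
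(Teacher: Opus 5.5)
Your proposal is correct and is essentially the paper's proof. Both test the $\rho_\eps$-equation with a cut-off times $\rho_\eps-1-\eps\kappa_\star$, cancel the $1/\eps$ terms via $2\kappa_\star=\frac{\beta}{\sqrt2}(1+\sqrt2\beta)$, control the leftover coupling term at rate $\eps$ through the $\mathcal{AC}_\eps$ energy bound, and absorb the $\nabla\varphi_\eps$, cut-off and nonlinear remainders using the $\eps$-uniform $L^p$ bounds. The only difference is that the paper bounds the coupling term in one step, $(\sigma-2\kappa_\star)^2\le c_\beta h(\u)$ via~\eqref{eq:h-ter}, which holds for every $\beta>0$. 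You should do the same, because your item (a) carries the same hidden restriction you spotted in (b): \eqref{eq:decay-u1-to-well} is also derived from $h_2\ge0$. For general $\beta$ the restriction is removed by $(u_1^2-1-\sqrt2\beta)^2\le 2(\abs{\u}^2-1-\sqrt2\beta)^2+2u_2^4\le C_\beta h(\u)$, which follows from~\eqref{eq:h-ter} and~\eqref{eq:norm-u}.
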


\begin{proof}	
	Using a standard covering argument, it suffices to show 
	that, for any ball $B = B(x_0,\,R) \csubset \Omega \setminus \spt\mu_\star$, we have 
	\begin{equation}\label{eq:improved-decay-rho-B}
        \int_{B'} \left\{ \abs{\nabla \rho_\eps}^2 + \left(\frac{\rho_\eps -1}{\eps} - \kappa_\star \right)^2 \right\}\,{\d}x \leq C_\beta(x_0,\,R) \eps, 
    \end{equation}
	where $B' = B(x_0,\,R/2)$ and $C_\beta(x_0,\,R)$ 
	depends only on $\beta$, $x_0$, $R$, $\Omega$, $\Cpot$, and 
	the $L^1(\partial\Omega)$- and the $L^2(\partial\Omega)$-norm of 
	$\Qb \times \partial_{\ttau} \Qb$.
	
	\setcounter{step}{0}    
	\begin{step}[Using the clearing-out property]
	Since $B$ stays at positive distance from $\spt\mu_\star$, 
	we know from \cite[Proposition~2.5 and Lemma~3.3]{CDS1} that there exists $\eps_*$ 
    such that, for any $\eps$ with $0 < \eps \leq \eps_* R$, 
    we have $\rho_\eps \geq 1/2$ in $B$. 
    From now on, to keep the notation as light as possible, 
    we drop the subscript $\eps$, writing (for instance)
    $\rho$ instead of $\rho_\eps$, and so on.
    
	Since $\rho \geq 1/2$ in $B$, by~\eqref{eq:rho-eps-bis} 
	and~\eqref{eq:QMM-u}, the equation satisfied by $\rho$ in $B$ reads
    \begin{equation}\label{eq:eq-rho}
        -\Delta \rho + 4\rho \abs{\nabla \varphi}^2 
        + \frac{1}{\eps^2}(\rho - 1) (\rho + 1)\rho 
        = \frac{\sigma}{\eps},
    \end{equation}
    where we set
    \[
        \sigma := \beta \frac{\Q \M\cdot \M}{\rho} 
        = \frac{\beta}{\sqrt{2}}\left(u_1^2 - u_2^2\right).
    \]
    As in \cite[proof of Lemma~3.7]{CDS1}, we subtract $\frac{2}{\eps} \kappa_\star$ from both 
    sides of~\eqref{eq:eq-rho}, so as to obtain
    \begin{equation}\label{eq:eq-rho-bis}
        -\Delta \rho + 4 \rho \abs{\nabla \varphi}^2 + \frac{1}{\eps}\left(\frac{\rho - 1}{\eps} (\rho + 1)\rho - 2 \kappa_\star \right) = \frac{\sigma - 2 \kappa_\star}{\eps}.
    \end{equation}
    We further observe that
    \begin{equation}\label{eq:sigma-2k*}
    \begin{split}
    	\sigma - 2 \kappa_\star &= \frac{\beta}{\sqrt{2}}\left( u_1^2 - u_2^2 - 1 - \sqrt{2}\beta \right) \\
    	&=\frac{\beta}{\sqrt{2}}\left( \abs{\u}^2 - 1 - \sqrt{2}\beta - 2 u_2^2 \right), 
 %   	&=\frac{\beta}{\sqrt{2}}\left( \abs{\u}^2 - 1 - \sqrt{2}\beta\rho + \sqrt{2}\beta(1-\rho) - 2u_2^2  \right) \\
 %   	&=\frac{\beta}{\sqrt{2}}\left( \abs{\M}^2 - 1 - \sqrt{2}\beta\rho + \sqrt{2}\beta(1-\rho) - 2u_2^2  \right) \\
 %   	&\leq \frac{\beta}{\sqrt{2}}\left( \sqrt{2}\beta(1 + \eps \kappa_\star -\rho) - \sqrt{2}\beta \eps \kappa_\star \right)\\
 %  	&\leq \beta^2 \left( \rho - 1 - \eps \kappa_\star \right)_-,
    \end{split}
    \end{equation}
 %   where we used~\eqref{eq:max-princ-M} to pass from the third to the fourth line above.
 	which will be used later on. We will also use the trivial identity 
    \begin{equation}\label{eq:trivial-rho}
    	(\rho - 1)(\rho + 1) \rho = 2(\rho - 1) + 3(\rho - 1)^2 + (\rho-1)^3. 
    \end{equation}
    \end{step}
    
\begin{step}
	Let $\zeta \in C^\infty_c(\R^2)$ be any cut-off function with support in $B$ 
    such that 
    \[
    	\spt \zeta \subset B, \qquad 
    	0 \leq \zeta \leq 1, \qquad 
    	\zeta \equiv 1 \quad \mbox{in } B', \qquad 
    	\abs{\nabla \zeta} \leq \frac{4}{R} \quad \mbox{in } \R^2.
    \] 
	We multiply both sides of~\eqref{eq:eq-rho-bis} by 
	$\zeta^2 (\rho - 1 - \eps \kappa_\star)$ and we integrate 
	the result over $B$. After rearrangement and upon 
	using~\eqref{eq:trivial-rho}, we obtain
	\begin{equation}\label{eq:improved-decay-rho-compu1}
	\begin{split}
		\int_B \zeta^2 \left\{ \abs{\nabla \rho}^2 + 2 \left(\frac{\rho-1}{\eps} - \kappa_\star\right)^2 \right\} \,{\d}x 
		&= \int_B \zeta^2 (\sigma - 2 \kappa_\star) \left(\frac{\rho-1}{\eps} - \kappa_\star\right)\,{\d}x \\
		&- \eps \int_B 2 \zeta \, \nabla \rho \cdot \nabla \zeta  \left(\frac{\rho-1}{\eps} - \kappa_\star\right) \,{\d}x \\
		&- \eps \int_B 4 \zeta^2 \, \rho \abs{\nabla \varphi}^2 \left(\frac{\rho-1}{\eps} - \kappa_\star\right)  \,{\d}x \\
		&- \int_B \zeta^2\left( 2 (\rho-1)^2 + (\rho-1)^3 \right)\left(\frac{\rho-1}{\eps} - \kappa_\star\right) \,{\d}x
	\end{split}
	\end{equation}
	Upon using Young's inequality, we can estimate
	\begin{equation}\label{eq:improved-decay-rho-compu2}
		\eps \int_B 2 \zeta \nabla \rho \cdot \nabla \zeta  \left(\frac{\rho-1}{\eps} - \kappa_\star\right) \,{\d}x 
		\leq \frac{1}{2} \int_B \zeta^2 \abs{\nabla \rho}^2\,{\d}x + 8 \eps^2 \int_B \abs{\nabla \zeta}^2 \left(\frac{\rho-1}{\eps} - \kappa_\star\right)^2 \,{\d}x. 
	\end{equation}
	Since $\abs{\nabla \zeta}^2 \leq 16/R^2$, we have  
	\[
		\int_B \abs{\nabla \zeta}^2 \left(\frac{\rho-1}{\eps} - \kappa_\star\right)^2 \,{\d}x 
		\leq \frac{16}{R^2} \int_B \left(\frac{\rho-1}{\eps} - \kappa_\star\right)^2 \,{\d}x.
	\]
	By~\eqref{eq:GL-bounds-p} (with $p = 2$), 
	the integral on the right-hand side 
	is bounded independently of $\eps$, in terms of a constant $C_\beta(x_0,\,R)$ 
	depending only on $\beta$, $\Omega$, $x_0$, $R$, ${\rm C}_{\rm pot}$, and the 
	$L_1(\partial\Omega)$- and the $L^2(\partial\Omega)$-norm
of $\Qb \times \partial_{\ttau}\Qb$. Therefore,
	\[
	\eps \int_B 2 \zeta \nabla \rho \cdot \nabla \zeta  \left(\frac{\rho-1}{\eps} - \kappa_\star\right) \,{\d}x 
		\leq \frac{1}{2} \int_B \zeta^2 \abs{\nabla \rho}^2\,{\d}x + C_\beta(x_0,\,R) \eps^2.
	\]
%	if we shrink $\bar{\eps}$ so 
%	that $\bar{\eps} \leq R / 4$, say, then we make the  
%	right-hand side of~\eqref{eq:improved-decay-rho-compu2}  
%	smaller than half of the left-hand side of~\eqref{eq:improved-decay-rho-compu1}.
	Consequently, using again Young's inequality, 
	from~\eqref{eq:improved-decay-rho-compu1} 
	we obtain , 
	\begin{equation}\label{eq:improved-decay-rho-compu3}
	\begin{split}
		\int_B \zeta^2 \left\{ \abs{\nabla \rho}^2 + \left(\frac{\rho-1}{\eps} - \kappa_\star\right)^2 \right\} \,{\d}x 
		\leq & 3 \int_B \zeta^2 (\sigma - 2 \kappa_\star)^2 \,{\d}x 
		+ 12 \eps^2 \int_B \zeta^2 \rho^2 \abs{\nabla \varphi}^4   \,{\d}x \\
		&+ 3 \int_B \zeta^2 \left( 2 (\rho-1)^2 + (\rho-1)^3 \right)^2 \,{\d}x 
		+ C_\beta(x_0,\,R) \eps^2
	\end{split}
	\end{equation}
	From~\eqref{eq:GL-bounds-p} (i.e., from \cite[Proposition~2.6]{CDS1}), we already 
	know that the last term on the right-hand side 
	in~\eqref{eq:improved-decay-rho-compu1} tends to zero at least 
	as fast as $\eps^4$, i.e., 
	\begin{equation}\label{eq:improved-decay-rho-compu4}
		\int_B \zeta^2 \left( 2 (\rho-1)^2 + (\rho-1)^3 \right)^2 \,{\d}x
		= \mathrm{O}_{\eps \to 0}(\eps^4).
	\end{equation}	   
	Next, by~\eqref{eq:varphi-Lp-bounds} and~\eqref{eq:max-princ-rho}, we have 
	\begin{equation}\label{eq:improved-decay-rho-compu5}
		\eps^2 \int_B 4 \zeta^2 \rho^2 \abs{\nabla \varphi}^4 \,{\d}x 
		= \mathrm{O}_{\eps \to 0}(\eps^2).
	\end{equation}
	Finally, by~\eqref{eq:sigma-2k*} and~\eqref{eq:h-ter}, 
	\[
		\frac{1}{4} (\sigma - 2 \kappa_\star)^2
		\leq c_\beta\left[ \frac{1}{4}\left(\abs{\u}^2 - 1 - \sqrt{2}\beta \right)^2 + \sqrt{2}\beta u_2^2 \right]
		= c_\beta h(\u),
	\]
	where $c_\beta$ is a constant depending only on $\beta$. 
	Therefore, by~\eqref{eq:ACeps-equibdd} and since $0 \leq \zeta \leq 1$,
	\begin{equation}\label{eq:improved-decay-rho-compu6}
		\frac{1}{4} \int_B \zeta^2 (\sigma - 2 \kappa_\star)^2 \,{\d}x = \mathrm{O}_{\eps \to 0} (\eps).
	\end{equation}
	Combining~\eqref{eq:improved-decay-rho-compu3},~\eqref{eq:improved-decay-rho-compu4}, 
	\eqref{eq:improved-decay-rho-compu5}, and~\eqref{eq:improved-decay-rho-compu6} 
	with~\eqref{eq:improved-decay-rho-compu2} and 
	recalling that $\zeta^2 \equiv 1$ on $B'$, 
	we end up with~\eqref{eq:improved-decay-rho-B}.
\end{step}

\end{proof}

%{\BBB To further improve this lemma, we should 
%estimate $\int_B \zeta^2(\sigma - 2 \kappa_\star)\left( \frac{\rho-1}{\eps} - \kappa_\star \right)$
%without using, as I did, Young's or H\"{o}lder's inequality. 
%This is because the term  
%$\int_B \zeta^2 \left(\abs{\u}^2 - 1 - \sqrt{2}\beta \right)^2$ 
%is at most of order $\eps$, but it cannot be of lower order than that, in 
%general. However, it could be possible to prove~\eqref{eq:improved-decay-rho} 
%for any $p < +\infty$. This would probably be helpful, because it would 
%imply, via Gagliardo-Nirenberg inequality, the (local) uniform convergence 
%of $\frac{\rho - 1}{\eps}$ to $\kappa_\star$, which, at the moment being, 
%is not clear. 
%}

%--------------------------------------------
\section{Refined asymptotics in the regime $\sqrt{2}\beta > 1$}\label{sec:decay-u2}

In this section, we show that, if $\sqrt{2}\beta > 1$,  
%(or, alternatively, 
%for any value of $\beta > 0$, but locally away from $\spt\mu_\star \cup \spt\nu_\star$ 
%--- see Remark~\ref{rk:decay-away-from-singular-sets}), 
then the structure of the equation satisfied by the component $u_{2,\eps}$ implies 
the decay of the $W^{1,2}$-norm of $u_{2,\eps}$ at a power-rate in $\eps$, 
far from $\spt\mu_\star$. 
As a consequence, in this regime the component $u_{2,\eps}$ does \emph{not} 
concentrate energy on the limiting varifold.
%the problem associated with the functional $\mathcal{AC}_\eps$ and the equation~\eqref{eq:EL-u} looks pretty much like a \emph{perturbed} but \emph{scalar} Allen-Cahn problem.
The main result of this Section is the following proposition.
\begin{prop}\label{prop:decay-u2}
	Let $\{(\Q_\eps,\,\M_\eps)\}_\eps$ be a sequence of critical points of 
	$\F_\eps$ satisfying boundary conditions either as in \eqref{hp:bc-dir} or as 
	in~\eqref{hp:bc-mixed}, as well as assumption~\eqref{hp:equibdd-f}. 
	Assume, in addition, that $\sqrt{2}\beta > 1$.
    Let $K \subset \Omega \setminus \spt\mu_\star$ be any compact set   
     % for some $c > 0$. 
    %or, alternatively, assume that $K \subset \Omega \setminus (\spt\mu_\star \cup \spt\nu_\star)$.
    and let $G \csubset K$ be any simply connected, open set, with smooth 
    boundary $\partial G$.   
    Let $\{\u_\eps\}_\eps$ be the sequence of maps defined in $G$ as prescribed by~\eqref{eq:u}. Then, for any $\alpha \in (0,\,2)$, there hold
    \begin{gather}
        \int_G \left( \eps \abs{\nabla u_{2,\eps}}^2 + \frac{1}{\eps}\left(\abs{\u_\eps}^2 - 1 + \sqrt{2}\beta \rho_\eps \right) u_{2,\eps}^2 \right)\,{\d}x \leq C_\alpha(\dist(G, \partial K)) \eps^\alpha, \label{eq:decay-u2-goal1} \\ 
        \int_G \left( \eps \abs{\nabla u_{2,\eps}}^2 + \frac{1}{\eps} u_{2,\eps}^2 \right)\,{\d}x \leq C_\alpha'(\dist(G, \partial K)) \eps^\alpha \label{eq:decay-u2-goal2}.
    \end{gather}
    The constants $C_\alpha(\dist(G,\,\partial K))$, $C'_\alpha(\dist(G,\,\partial K))$ depend only on $\dist(G, \partial K)$, $\alpha$, $\beta$, $\Omega$, $\Cpot$, and the $L^1(\partial\Omega)$- and the $L^2(\partial\Omega)$-norms of $\Qb \times \partial_\ttau \Qb$. 
\end{prop}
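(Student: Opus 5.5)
We will work with the second component of \eqref{eq:EL-u}. Writing $\u = (u_1, u_2)$, the $u_2$-equation reads
\[
	-\Delta u_2 + \frac{1}{\eps^2}\left( \abs{\u}^2 - 1 + \sqrt{2}\beta\rho \right) u_2 = u_2 \abs{\nabla\varphi}^2 + \nabla u_1\cdot\nabla\varphi - u_1 \Delta\varphi .
\]
The coefficient $a_\eps := \abs{\u}^2 - 1 + \sqrt{2}\beta\rho$ carries the structural change at $\sqrt{2}\beta > 1$. Since $\rho \to 1$ uniformly on $K$ by \eqref{eq:rho-unif-conv}, we have $a_\eps \geq \sqrt{2}\beta\rho - 1 \geq c_0 := (\sqrt{2}\beta - 1)/2 > 0$ on $K$ for $\eps$ small. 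So $u_2$ solves a \emph{coercive} Schrödinger-type equation with potential of size $\eps^{-2}$. Multiplied by $\eps$, the left-hand side of \eqref{eq:decay-u2-goal1} is then exactly the quantity obtained by testing the equation with $\eps\,\zeta^2 u_2$. Moreover, \eqref{eq:decay-u2-goal2} follows from \eqref{eq:decay-u2-goal1} because $a_\eps \geq c_0$.

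The plan is an iteration. Fix nested sets $G = G_N \csubset G_{N-1} \csubset \dots \csubset G_0 \csubset K$, with gaps of size $\dist(G,\partial K)/N$, and cut-offs $\zeta_k$ equal to $1$ on $G_{k+1}$, supported in $G_k$, with $\abs{\nabla\zeta_k} \lesssim N/\dist(G,\partial K)$. On $G_0$ we use $\abs{\nabla\varphi}$ bounded in every $L^p$ by \eqref{eq:varphi-Lp-bounds}. We also use $\Delta\varphi$, which we rewrite through the $\Q$-equation: taking the $\n\times$ component of \eqref{eq:EL-Q}, as in \cite{CDS1}, gives
\[
	\div(\rho^2\nabla\varphi) = -\frac{\beta}{2\sqrt{2}\,\eps}\, u_1 u_2 \quad \mbox{(up to constants)} .
\]
Testing the $u_2$-equation with $\zeta_k^2 u_2$ gives
\[
	\int \zeta_k^2\left( \abs{\nabla u_2}^2 + \frac{a_\eps}{\eps^2}u_2^2 \right) \leq \int \abs{\nabla\zeta_k}^2 u_2^2 + \int \zeta_k^2 u_2^2 \abs{\nabla\varphi}^2 + \abs{\int \zeta_k^2 u_2\left( \nabla u_1\cdot\nabla\varphi - u_1\Delta\varphi \right)} .
\]
We integrate the $u_1\Delta\varphi$ term by parts, so that only $\nabla\varphi$ appears, together with $\nabla(\zeta_k^2 u_1 u_2)$. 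Alternatively we substitute the divergence identity, which produces a term $\eps^{-1}\int\zeta_k^2 u_1^2 u_2^2$ with a good structure, combined with $\rho^{-2}$ corrections controlled by Lemma~\ref{lemma:improved-decay-rho} and \eqref{eq:GL-bounds-p}.

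Denote $X_k := \int_{G_k}\left(\eps\abs{\nabla u_2}^2 + \eps^{-1}u_2^2\right)$. The basic bound \eqref{eq:decay-u2-basic}, together with $\eps\int\abs{\nabla\u}^2 \leq C$ from \eqref{eq:ACeps-equibdd}, gives $X_0 \leq C$. Each right-hand-side term is estimated by Hölder and Young. The dangerous terms are of the form
\[
	\int \zeta_k^2 \abs{u_2}\,\abs{\nabla u_1}\,\abs{\nabla\varphi} .
\]
We bound them by $\eps^{-1}\delta\int\zeta_k^2 u_2^2 + C\eps\int\abs{\nabla u_1}^2\abs{\nabla\varphi}^2$. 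Here $\nabla u_1$ is only controlled by $\eps\int\abs{\nabla u_1}^2 \leq C$, and $\nabla u_1$ concentrates on the interface, so the extra factor $\abs{\nabla\varphi}^2$ must be handled. We will use Hölder with $\abs{\nabla\varphi}\in L^p$ for large $p$ and an $L^{q}$ bound on $\eps^{1/2}\nabla u_1$ with $q$ slightly above $2$. The latter comes from interior elliptic estimates for \eqref{eq:EL-u} at scale $\eps$ (gradient bound $\abs{\nabla\u}\lesssim\eps^{-1}$ from \eqref{eq:max-grad-QM}, interpolated with the $L^2$ bound). This loses an arbitrarily small power of $\eps$, which is the source of the restriction $\alpha<2$ instead of $\alpha = 2$. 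Multiplying by $\eps$, each step should give $X_{k+1} \leq C\left(\eps^{2}\,\eps^{-\delta} + \eps^{\gamma}X_k\right)$ for some fixed $\gamma>0$, since the cut-off term contributes $\eps^2\cdot\eps^{-1}\int u_2^2 \leq \eps^2 X_k$. Iterating $N \sim \alpha/\gamma$ times yields $X_N \leq C_\alpha\eps^\alpha$, with constants depending on $\dist(G,\partial K)$ through the cut-offs.

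The main obstacle is the coupling term $u_2(\nabla u_1\cdot\nabla\varphi - u_1\Delta\varphi)$. Its source part, roughly $\nabla\varphi\cdot\nabla u_1$, is not small and does not vanish with $u_2$, so a pure energy argument gives only $X\lesssim 1$. The gain must come from pairing it with $u_2$ and exploiting the $\eps^{-2}$ coercivity, so that effectively $u_2 \approx \eps^2 a_\eps^{-1}\times(\mbox{source})$. We will first try the bootstrap above. If it stalls, the fallback is to use the $\Q$-equation identity to replace $\Delta\varphi$, and to bound $\eps^{2}\int\abs{\nabla u_1}^2\abs{\nabla\varphi}^2$ by $\eps^{2-\delta}$ via higher integrability of $\nabla\varphi$ and the $L^\infty$ bound $\eps\abs{\nabla u_1}\leq C$.
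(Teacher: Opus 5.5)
\textbf{Your primary bootstrap does not close.} Integrating $\eps\int\zeta_k^2 u_1u_2\Delta\varphi$ by parts produces, besides terms you can handle, the term $\eps\int\zeta_k^2 u_1\nabla u_2\cdot\nabla\varphi$. In it, $u_1$ does not decay and $\nabla\varphi$ is only known to be bounded in $L^p$. The best available bounds for it are $\frac{\eps}{4}\int\zeta_k^2\abs{\nabla u_2}^2 + C\eps\int u_1^2\abs{\nabla\varphi}^2$, or $C\eps^{1/2}X_k^{1/2}$. Either way you get $X_{k+1}\lesssim \eps^{1/2}X_k^{1/2}+\dots$, whose fixed point is $X\sim\eps$. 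So the recursion $X_{k+1}\le C(\eps^{2-\delta}+\eps^{\gamma}X_k)$ fails along this route. The real obstruction is the lack of control on $\Delta\varphi$, not the $\nabla u_1$-term you single out.

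This is where the paper changes gear. Its Step~1 (essentially your integration by parts) gives only $X\lesssim\eps$, hence $\int u_2^2\lesssim\eps^2$ by coercivity. Then the source $\rho u_1u_2/\eps$ of the $\varphi$-equation is bounded in $L^2$, and elliptic regularity for $\zeta^2\varphi$ gives $\norm{\zeta^2\Delta\varphi}_{L^2}\le C$. From there the $\Delta\varphi$-term is estimated directly, $W^{2,2+}$ and Lipschitz bounds on $\varphi$ follow, and the exponent is iterated via $\alpha_{n+1}=(\alpha_n+2)/2$.

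\textbf{Your fallback is the right fix, and it closes in one step, so make it the main argument.} From $-\div(\rho^2\nabla\varphi)=\frac{\beta\rho}{\sqrt2}\frac{u_1u_2}{\eps}$ you get pointwise
$u_1\Delta\varphi=-\frac{2u_1}{\rho}\nabla\rho\cdot\nabla\varphi-\frac{\beta u_1^2 u_2}{\sqrt2\,\rho\,\eps}$.
The last term is linear in $u_2$ with a favourable sign, and in any case of order $\eps^{-1}$, so it is absorbable into $\eps^{-2}a_\eps u_2$. The $u_2$-equation becomes $-\Delta u_2+W_\eps u_2=S_\eps$, with $W_\eps\ge c_0\eps^{-2}$ and $S_\eps$ built only from $\nabla u_1\cdot\nabla\varphi$ and $u_1\rho^{-1}\nabla\rho\cdot\nabla\varphi$. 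Testing with $\eps\zeta^2u_2$ on a ball and using Young against the coercive term,
\[
\int\zeta^2\left(\eps\abs{\nabla u_2}^2+\frac{a_\eps}{\eps}u_2^2\right)\lesssim \eps\int_B u_2^2+\eps^3\int_B S_\eps^2\lesssim \eps^2+\eps^{1+1/q}.
\]
Here $\eps^3\int\abs{\nabla u_1}^2\abs{\nabla\varphi}^2\lesssim\eps^{1+1/q}$: interpolate $\abs{\nabla u_1}\lesssim\eps^{-1}$ with $\eps\int\abs{\nabla u_1}^2\lesssim1$ and use $\nabla\varphi\in L^{2q'}$. Also $\eps^3\int\abs{\nabla\rho}^2\abs{\nabla\varphi}^2\lesssim\eps^3$.

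Letting $q\downarrow1$ gives every $\alpha<2$ at once. There are no nested domains, no iteration, and no $W^{2,p}$ theory for $\zeta^2\varphi$, and the loss in $\alpha$ comes only from the interpolation. What the paper's longer route buys is the bounds on $\norm{\nabla\varphi}_{L^\infty}$ and $\norm{\Delta\varphi}_{L^2}$ that it reuses in the proof of Theorem~\ref{mainthm:integrality}. These can, however, be recovered afterwards from your estimate via the $\varphi$-equation.

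\textbf{Two corrections are needed.}
\begin{enumerate}
\item Your Young splits are off by a power of $\eps$. The split $\frac{\delta}{\eps}\int\zeta^2u_2^2+C\eps\int\abs{\nabla u_1}^2\abs{\nabla\varphi}^2$ in the unscaled inequality only gives $\eps^{1-\delta}$ for $X$. You must split against the full coercive term, i.e.\ $\frac{c_0}{4\eps}\int\zeta^2u_2^2+C\eps^3\int\abs{\nabla u_1}^2\abs{\nabla\varphi}^2$ in the $X$-normalisation. Likewise, $\eps^2\int\abs{\nabla u_1}^2\abs{\nabla\varphi}^2$ is of order $\eps^{1-\delta}$, not $\eps^{2-\delta}$.
\item With $\m=\n^\perp$, a direct computation shows the $\m$-component of $\Delta\M$ is $\Delta u_2+2\nabla u_1\cdot\nabla\varphi+u_1\Delta\varphi-u_2\abs{\nabla\varphi}^2$. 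So the $\abs{\nabla\varphi}^2$-term has a good sign. Because of the factor $2$, the $\nabla u_1$-term does not cancel under integration by parts, so your direct estimate of it is needed in any case.
\end{enumerate}
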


\begin{proof}
    For notational convenience, we shall drop all subscripts $\eps$ within this proof. 

    We first argue locally, proving a version 
    of~\eqref{eq:decay-u2-goal1} and~\eqref{eq:decay-u2-goal2} 
    that hold in balls whose closure is well contained in 
    $\Omega \setminus \spt \mu_\star$. 
    Then, we obtain the inequalities in the desired form by a 
    standard covering argument. 
    
    Let $B_1 := B(x_0,\,R)$ be any ball in $\Omega \setminus \spt\mu_\star$ 
    such that $B_2 := B(x_0,\,2R)$ is still contained in $\Omega \setminus \spt\mu_\star$. 
    For $\ell > 0$, let us denote $B_{\ell} := B(x_0,\,\ell R)$. 
    By \cite[Corollary~3.8]{CDS1}, we know that there exists $\eps_1 = \eps_1(x_0,\,R)$ 
    such that $\abs{\Q_\eps} \geq 1/2$ in $B_1$, for every $\eps \leq \eps_1$. 
    Consequently, defining pointwise $\u$ as in~\eqref{eq:u} 
    gives rise to a well-defined map in $B_1$ which satisfies~\eqref{eq:EL-u}. 
    The equation satisfied by the component $u_2$ is obtained by projecting~\eqref{eq:EL-u} along the direction $\e_2 := (0,\,1)^{\rm t}$ and, after adding to both sides the term $\frac{\sqrt{2}\beta(1+\eps \kappa_\star)}{\eps^2} u_2$ 
    and rearranging, it reads as follows:
    \begin{equation}\label{eq:EL-u2}
    \begin{split}
        -\Delta u_2 &+ \frac{1}{\eps^2} \left( \abs{\u}^2 - 1 + \sqrt{2}\beta + \sqrt{2}\beta \eps \kappa_\star \right) u_2 \\
        &= u_2 \abs{\nabla \varphi}^2 + \nabla u_1 \cdot \nabla \varphi + u_1 \Delta \varphi + \sqrt{2}\beta  \left(\kappa_\star - \frac{\rho_\eps - 1}{\eps}\right)\frac{u_2}{\eps} \qquad \mbox{in } B_1.
    \end{split}
    \end{equation}
    By assumption, there exists some $c > 0$ such that $\sqrt{2}\beta = 1 + c$, 
    hence 
    \[
    	\left( \abs{\u}^2 - 1 + \sqrt{2}\beta + \sqrt{2}\beta \eps \kappa_\star \right) \geq c > 0
    \] 
    for every $\eps > 0$.
 %   Let $K \subset \Omega \setminus \spt \mu_\star$ be any compact set and 

 \setcounter{step}{0}
\begin{step}[Basic bounds]\label{step:u2-basic-bounds}
    Let us denote
    \[
        B = B_1, \qquad B' = B_{1/2}, \qquad B'' = B_{1/4}
    \]
    Let $\zeta \in C^\infty_c(\R^2)$ be any cut-off function such that
    \begin{gather}
        0 \leq \zeta \leq 1, \qquad 
        \zeta \equiv 1 \quad \mbox{in } B', \qquad 
        \spt\zeta \subset B, \label{eq:cut-off} \\ 
        \abs{\nabla \zeta} \leq 2\left(\frac{2}{R}\right), \qquad \abs{\nabla^2 \zeta} \leq 2\left(\frac{4}{R^2}\right) \qquad \mbox{in } \R^2. \label{eq:cut-off-grad}
    \end{gather}
    After multiplying~\eqref{eq:EL-u2} by $\eps \zeta^2 u_2$ and 
    integrating the result over $\Omega$ (or, which is the same, over $B$), 
    we obtain the identity
    \begin{equation}\label{eq:decay-u2-compu1}
    \begin{split}
        \int_\Omega & \zeta^2 \left( \eps \abs{\nabla u_2}^2 + \frac{1}{\eps} \left( \abs{\u}^2 - 1 + \sqrt{2}\beta + \sqrt{2}\beta \eps \kappa_\star \right) u_2^2 \right)\,{\d}x \\
        &= 
        \eps \int_\Omega \zeta^2 \left( u_2^2 \abs{\nabla \varphi}^2 + u_2 \nabla u_1 \cdot \nabla \varphi + u_1 u_2 \Delta \varphi \right) \,{\d}x \\
        &+ \eps \int_\Omega 2 u_2 \zeta \nabla u_2 \cdot \nabla \zeta \,{\d}x 
        + \sqrt{2}\beta \int_\Omega \zeta^2 \left(\kappa_\star + \frac{1 -\rho_\eps}{\eps} \right) u_2^2\,{\d}x.
    \end{split}
    \end{equation}
    We now proceed to bound the terms on the right-hand side 
    of~\eqref{eq:decay-u2-compu1}. 

    Let $\eta > 0$ be any number. 
    By~\eqref{eq:decay-u2-basic},~\eqref{eq:norm-u}, and  \eqref{eq:varphi-Lp-bounds}, and H\"{o}lder's inequality, we have 
    \begin{equation}\label{eq:decay-u2-compu2}
    \begin{split}
        \int_\Omega \eps \zeta^2 u_2^2 \abs{\nabla \varphi}^2\,{\d}x 
        &\leq \eps\left( \int_B u_2^{2(1+\eta)}\,{\d}x \right)^{\frac{1}{1+\eta}} C_{\frac{2(\eta+1)}{\eta}}(x_0,\,R)^{\frac{\eta}{1+\eta}} \\
        &\leq C_2^{\frac{2}{1+\eta}} C_{\frac{2(\eta+1)}{\eta}}(x_0,\,R)^{\frac{\eta}{1+\eta}} \eps^{1+\frac{1}{1+\eta}},
    \end{split}
    \end{equation} 
    where $C_2$ is the constant in~\eqref{eq:decay-u2-basic} and 
    $C_{\frac{2(\eta+1)}{\eta}}(x_0,\,R)$ is the constant on the 
    right-hand side of~\eqref{eq:varphi-Lp-bounds} for $p = \frac{2(\eta+1)}{\eta}$. 
    
    Next, integrating by parts, we rewrite
    \begin{equation}\label{eq:decay-u2-compu3}
        \eps \int_\Omega \zeta^2\left( u_2 \nabla u_1 \cdot \nabla \varphi + u_1 u_2 \Delta \varphi \right) \,{\d}x = 
        - \eps \int_\Omega \zeta^2 u_1 \nabla u_2 \cdot \nabla \varphi \,{\d}x - \eps \int_\Omega 2 \zeta u_1 u_2 \nabla \zeta \cdot \nabla \varphi \,{\d}x
    \end{equation}
    We bound the right-hand side of~\eqref{eq:decay-u2-compu3} as follows. 
    First, we notice that
    \begin{equation}\label{eq:decay-u2-compu4}
    \begin{split}
        \eps \int_\Omega \zeta^2 \left( u_1 \nabla u_2 \cdot \nabla \varphi \right) \,{\d}x &\leq \int_\Omega \frac{\eps}{2} \zeta^2 \abs{\nabla u_2}^2 \,{\d}x + 2 \eps \int_B \abs{u_1}^2 \abs{\nabla \varphi}^2\,{\d}x \\
        &\leq \int_\Omega \frac{\eps}{2} \zeta^2 \abs{\nabla u_2}^2 \,{\d}x + C_{\beta}(x_0,\,R) \eps,
    \end{split}
    \end{equation}
    where we used Young's inequality, \eqref{eq:norm-u}, and~\eqref{eq:varphi-Lp-bounds}. 
    Next, by H\"{o}lder's inequality,~\eqref{eq:norm-u},~\eqref{eq:decay-u2-basic}, 
    and~\eqref{eq:varphi-Lp-bounds}, 
    \begin{equation}\label{eq:decay-u2-compu5}
    \begin{split}
        \eps \int_\Omega 2 \zeta u_1 u_2 \nabla \zeta \cdot \nabla \varphi \,{\d}x 
        &= 2 \eps \int_B u_1 u_2 (\zeta \nabla \zeta \cdot \nabla \varphi)\,{\d}x \\ 
        &\leq C_\beta \eps \left( \int_B u_2^2 \,{\d}x \right)^{1/2}  \left( \int_B \abs{\nabla \zeta}^2 \abs{\nabla \varphi}^2 \,{\d}x \right)^{1/2} \\
        &\leq C_\beta(x_0,\,R) \frac{\eps^\frac{3}{2}}{R}.
    \end{split}
    \end{equation}
    In both~\eqref{eq:decay-u2-compu4} and~\eqref{eq:decay-u2-compu5}, 
    the constant $C_\beta(x_0,\,R)$ depends only on $x_0$, $R$, $\Omega$, $\Cpot$, 
    and the $L^1(\partial\Omega)$- and the $L^2(\partial \Omega)$-norm of 
    $\Qb \times \partial_\ttau \Qb$. 
    For notational convenience, within this proof we avoid writing explicitly 
    the dependence on $\Omega$, $\Cpot$, and the $L^1(\partial\Omega)$- and 
    the $L^2(\partial \Omega)$-norm of $\Qb \times \partial_\ttau \Qb$ of the constants. 
    
    Next, once again by Young's inequality 
    and~\eqref{eq:decay-u2-basic}, we get 
    \begin{equation}\label{eq:decay-u2-compu6}
    \begin{split}
        2 \eps \int_\Omega\left( \zeta \nabla u_2 \right) \cdot \left( u_2 \nabla \zeta \right) \,{\d}x 
        &\leq \int_\Omega \zeta^2 \left( \frac{\eps}{4} \abs{\nabla u_2}^2 \right) \,{\d}x + 16 \eps\norm{\nabla \zeta}_{L^\infty(K)}^2 \int_B u_2^2\,{\d}x \\
        &\leq \int_\Omega \zeta^2 \left( \frac{\eps}{4} \abs{\nabla u_2}^2 \right) \,{\d}x + 256 C_2 \frac{\eps^2}{R^2}
    \end{split}
    \end{equation}
    Finally, by H\"{o}lder's inequality, Lemma~\ref{lemma:improved-decay-rho}, 
    and~\eqref{eq:decay-u2^4-basic}, 
    \begin{equation}\label{eq:decay-u2-compu6-bis}
    \begin{split}
   	 	\int_B \zeta^2 \left(\kappa_\star + \frac{1 -\rho_\eps}{\eps} \right) u_2^2\,{\d}x 
   	 	&\leq \left( \int_B \left( \frac{\rho - 1}{\eps} - \kappa_\star \right)^2 \zeta^2 \,{\d}x \right)^\frac{1}{2} 
   	 	\left( \int_B \zeta^2 u_2^4 \,{\d}x \right)^\frac{1}{2} \\
   	 	&= \mathrm{O}_{\eps \to 0}(\eps)
   	\end{split}
    \end{equation}
    Combining~\eqref{eq:decay-u2-compu2} for $\eta=1$ with~\eqref{eq:decay-u2-compu3},~\eqref{eq:decay-u2-compu4},~\eqref{eq:decay-u2-compu5},~\eqref{eq:decay-u2-compu6}, and~\eqref{eq:decay-u2-compu6-bis}, 
    we obtain
    \begin{equation}\label{eq:decay-u2-step-1}
    \begin{split}
     \int_\Omega &\zeta^2 \left( \frac{\eps}{4} \abs{\nabla u_2}^2 + \frac{1}{\eps}\left( \abs{\u}^2 - 1 + \sqrt{2}\beta + \sqrt{2}\beta \eps \kappa_\star \right) u_2^2 \right)\,{\d}x   \\
     &\leq C_\beta(x_0,\,R) \eps + C_{\beta}(x_0,\,R) \eps^{3/2} + C_\beta(x_0,\,R) \frac{\eps^{3/2}}{R} + 256 C_2 \frac{\eps^2}{R^2} \\
     &\leq C_\beta(x_0,\,R) \eps.
    \end{split}
    \end{equation}
Notice that, up to this point, $B = B(x_0,\,R)$ can be any ball such that    
$B(x_0,\,2R) \subset \Omega \setminus \spt \mu_\star$ and $\beta$ can be any positive number 
(i.e., we have not used $\sqrt{2}{\beta} > 1$ yet). 
\end{step}

\begin{step}[$L^2$-bounds on $\Delta \varphi$]\label{step:bounds-Delta-phi}
    Assume now, in addition to the previous assumptions, that %$K 
    %$B(x_0,\,2R) \csubset \Omega \setminus (\spt \mu_\star \cup \spt \nu_\star)$ or that 
    $\sqrt{2}\beta = 1 + c > 1$. We have 
   % Either way, we have
    \begin{equation}\label{eq:decay-u2-compu7}
        \left( \abs{\u}^2 - 1 + \sqrt{2}\beta + \sqrt{2}\beta \eps \kappa_\star \right) u_2^2 
        \geq c u_2^2 \qquad \mbox{in } B, 
    \end{equation}
    %either by the uniform convergence of $\abs{\u}^2$ to $1 + \sqrt{2}\beta$ in $B_2$ or because $\sqrt{2}\beta - 1 = c > 0$, respectively. 
%    The implicit constant on the right-hand side of~\eqref{eq:decay-u2-compu7}  %can be taken to be the minimum between $c$ and $2\sqrt{2}\beta$ (and so, it depends only on $\beta$). 
    Combined with~\eqref{eq:decay-u2-step-1},~\eqref{eq:decay-u2-compu7} yields
    \begin{equation}\label{eq:decay-u2-intermediate}
        \int_B u_2^2 \,{\d}x \leq C_\beta\left(x_0,\,R \right) \eps^2,
    \end{equation}
    as well as
    \[
        \int_B \left( \eps \abs{\nabla u_2}^2 + \frac{1}{\eps} u_2^2 \right)\,{\d}x \leq C_\beta\left(x_0,\,R \right) \eps.
    \]
    Now, we take advantage of~\eqref{eq:decay-u2-intermediate} to get $L^2(B)$-bounds on $\zeta^2\Delta \varphi$. With such bounds at hand, we can improve on the estimate of the right-hand side of~\eqref{eq:decay-u2-compu3} and, in turn, obtain a better decay for the left-hand side of~\eqref{eq:decay-u2-step-1} in the smaller ball $B'$, see~\eqref{eq:decay-u2-improved}. With~\eqref{eq:decay-u2-improved} at hand, we obtain Lipschitz bounds on $\varphi$ in $B'$, 
    and in turn this implies, as we shall in the next step, a still faster decay of $u_2$ in the ball $B''$.

    To begin with, we recall that $\varphi$ satisfies the equation 
        \begin{equation}\label{eq:varphi}
        -\div(\rho^2 \nabla \varphi) = \frac{\beta \rho }{\sqrt{2}}\frac{u_1 u_2}{\eps} 
        \qquad \mbox{in } B,
    \end{equation}
    which is obtained by noticing that, by~\eqref{eq:EL-Q} and~\eqref{eq:EL-M},  
    \[
        \Q \times \Delta \Q = \div(\Q \times \nabla \Q) 
        = \frac{\eps}{2}\div (\M \times \nabla \M) = 
        \frac{\eps}{2} \M \times \Delta \M 
        = - \frac{\beta}{\eps} \Q\M \times \M
    \]
    and then that
    \[
        \div\left(\frac{1}{2} \Q \times \nabla \Q\right) = 
        \div(\rho^2 \nabla \varphi) 
        \qquad \mbox{and} \qquad 
        \Q\M \times \M
        = \sqrt{2} \rho u_1 u_2 \qquad \mbox{in } B,
    \]
    where the first identity follows from~\eqref{eq:spectral-thm} and~\eqref{eq:neps-phieps-1} while the second is an immediate consequence of the very definition~\eqref{eq:u} of $\u$.
    
    Let us set 
    \[
        g_\eps := \frac{\beta \rho }{\sqrt{2}}\frac{u_1 u_2}{\eps}.
    \]
    Thanks to~\eqref{eq:decay-u2-intermediate}, we see that the sequence $\{g_\eps\}_\eps$ is bounded in $L^2(B)$. Now, since $\varphi$ does not satisfy an homogeneous Dirichlet boundary condition, we \emph{cannot} conclude directly, by standard elliptic regularity and the $W^{1,2}$-bounds on $\rho$ far from $\spt \mu_\star$, that $\Delta \varphi$ is bounded in $L^2(B)$. However, looking at~\eqref{eq:decay-u2-compu3}, we see that we actually need to bound $\zeta^2 \Delta \varphi$, rather than $\Delta \varphi$. 

    In order to bound $\zeta^2 \Delta \varphi$, we observe that the function $\zeta^2 \varphi$ vanishes on $\partial B$ and satisfies 
    \begin{equation}\label{eq:improved-bound-varphi-compu1}
      -\div\left( \rho \nabla\left(\zeta^2 \varphi\right) \right) = g_\eps \zeta^2 - \rho \nabla \varphi \cdot \nabla \zeta^2 - \div\left(\rho \varphi \nabla \zeta^2\right) 
      \qquad \mbox{in } B.  
    \end{equation}
    Since $\{g_\eps\}_\eps$ is bounded in $L^2(B)$ and by~\eqref{eq:varphi-Lp-bounds} 
    and~\eqref{eq:strong-conv-rho-p}, we see that the right-hand side 
    of~\eqref{eq:improved-bound-varphi-compu1} is bounded in $L^2(B)$, uniformly with respect to $\eps$ 
    (and, indeed, by a constant depending only on $x_0$, $R$, $\Omega$, $\Cpot$, and the $L^1(\partial\Omega)$- and the $L^2(\partial \Omega)$-norm of $\Qb \times \partial_\ttau \Qb$), which yields 
    \[
        \norm{\zeta^2 \varphi}_{W^{2,2}(B)} \leq C_\beta(x_0,\,R).
    \]
    In particular,
    \begin{equation}\label{eq:Delta-varphi}
        \norm{\zeta^2 \Delta \varphi}_{L^2(B)} \leq C_\beta(x_0,\,R). 
    \end{equation}
%    Now, if $G \csubset K$ is a simply connected, open set, we can cover $G$ with finitely many balls $B(x_0,\,R) \subset K$ such that $B(x_0,\,2R) \subset K$, so as to obtain
%    \[
%  \norm{\varphi}_{W^{2,2}(G)} \leq C_\beta(R), \qquad 
 % \norm{\Delta \varphi}_{L^2(G)} \leq C_\beta(R). 
 %   \]
\end{step}

\begin{step}[Lipschitz bounds on $\varphi$ in $B'$]\label{step:Lip-bounds-phi}
    Using~\eqref{eq:decay-u2-intermediate} and~\eqref{eq:Delta-varphi}, we obtain
    \begin{equation}\label{eq:decay-u2-compu8}
       \eps \int_B u_1 u_2 \zeta^2 \Delta   \varphi \,{\d}x \leq C_\beta(x_0,\,R) \eps^2.
    \end{equation}
    Let $\eta > 0$ be any number. 
    By H\"{o}lder's inequality with $p_1 = 1/2$, $p_2 = 2 + \eta$, 
    and $p_3 = \frac{2(2+\eta)}{\eta}$ (so that $1/p_2 + 1/p_3 = 1/2$ and $1/p_1 + 1/p_2 + 1/p_3 = 1$), 
    we get
    \begin{equation}\label{eq:decay-u2-compu9}
    \begin{split}
        \eps \int_B \zeta^2 u_2 \nabla u_1 \cdot \nabla \varphi \,{\d}x 
        &\leq \eps \left( \int_B \abs{u_2}^2 \,{\d}x \right)^{\frac{1}{2}} \left( \int_B \abs{\nabla u_1}^{p_2} \,{\d}x \right)^{\frac{1}{p_2}} \left( \int_B \abs{\nabla \varphi}^{p_3} \,{\d}x \right)^{\frac{1}{p_3}} \\
        &\leq C_\beta(x_0,\,R) C_{\beta,\eta}^{\frac{\eta}{2+\eta}} \,\eps^{1 + \frac{1}{2 +\eta}} \\
        &= C_{\beta,\eta}(x_0,\,R) \, \eps^{1+\frac{1}{2+\eta}}.
    \end{split}
    \end{equation}
    Thus, using~\eqref{eq:decay-u2-intermediate},~\eqref{eq:Delta-varphi}, 
    and~\eqref{eq:decay-u2-compu8}, we may improve on~\eqref{eq:decay-u2-step-1} so as to obtain
    \begin{equation}\label{eq:decay-u2-improved}
        \int_{B'}\left( \eps \abs{\nabla u_2}^2 + \frac{1}{\eps}\left( \abs{\u}^2 - 1 + \sqrt{2}\beta + \sqrt{2}\beta \eps \kappa_\star \right)u_2^2\right)\,{\d}x \leq C_{\beta,\gamma}(x_0,\,R) \eps^{1+\frac{\gamma}{2}}, 
        %\qquad \forall \gamma \in (0,\,1),
    \end{equation}
    where $\gamma = \frac{1}{2+\eta}$. Since $\eta > 0$ was arbitrary, 
    this implies, in particular,
    \begin{equation}\label{eq:decay-u2-improved-bis}
        \int_{B'} \left( \frac{u_2}{\eps} \right)^2 \,{\d}x 
        \leq C_{\beta,\gamma}(x_0,\,R) \eps^{\frac{\gamma}{2}}, 
        \qquad \forall \gamma \in (0,\,1)
    \end{equation}
    In turn,~\eqref{eq:decay-u2-improved-bis} implies that
    \begin{equation}\label{eq:decay-L2-norm-geps}
        \norm{g_\eps}_{L^2(B')} \leq C_{\beta,\gamma}(x_0,\,R) \eps^\frac{\gamma}{4}, 
        \qquad \forall \gamma \in (0,\,1)
    \end{equation}
    Plugged in~\eqref{eq:varphi} and along 
    with~\eqref{eq:improved-bound-varphi-compu1},~\eqref{eq:decay-u2-improved-bis} 
    allows to conclude that 
    \begin{equation}\label{eq:W22+est-varphi}
        \norm{\zeta^2 \varphi}_{W^{2,2+\frac{\gamma}{2}}(B')} \leq C_{\beta,\gamma}(x_0,\,R), 
        \qquad \forall \gamma \in (0,\,1),
    \end{equation}
    so that, by Sobolev embedding, 
    \begin{equation}\label{eq:Lip-bound-varphi}
        \norm{\nabla \varphi}_{L^\infty(B')} \leq C_\beta(x_0,\,R).
    \end{equation}
    Notice that the constant on the right-hand side of~\eqref{eq:Lip-bound-varphi} 
    does not depend on $\gamma$ because, since~\eqref{eq:W22+est-varphi} 
    holds for any $\gamma \in (0,\,1)$, we can take the constant on the 
    right-hand side of~\eqref{eq:Lip-bound-varphi} to be the infimum 
    over the constants $C_{\beta,\gamma}(x_0,\,R)$ obtained by 
    combining~\eqref{eq:W22+est-varphi} and Sobolev embedding.
\end{step}

\begin{step}[Improved decay of $u_2$ in $B''$]\label{step:u2-improved-decay}
    In order to take advantage 
    of~\eqref{eq:Lip-bound-varphi}, we test~\eqref{eq:EL-u2} against 
    $\eps \psi^2 u_2$, where $\psi \in C^\infty(\R^2)$ is any cut-off function such that
    \begin{gather*}
        0 \leq \psi \leq 1, \qquad 
        \psi \equiv 1 \quad \mbox{in } B'', \qquad 
        \spt\psi \subset B', \\ 
        \abs{\nabla \psi} \leq 4\left(\frac{2}{R}\right), \qquad \abs{\nabla^2 \psi} \leq 4\left(\frac{4}{R^2}\right) \qquad \mbox{in } \R^2. 
    \end{gather*}
    Then, upon using~\eqref{eq:Lip-bound-varphi} and~\eqref{eq:decay-u2-intermediate}, 
    we obtain the improved estimate
    \begin{equation}\label{eq:decay-u2-compu10}
    \begin{split}
       \eps \int_{B'} \psi^2 u_2 \nabla u_1 \cdot \nabla \varphi \,{\d}x &\leq C_\beta(x_0,\,R)\,\sqrt{\eps}\left( \int_B u_2^2 \,{\d}x \right)^{\frac{1}{2}} \left( \eps \int_B \abs{\nabla u_1}^2 \,{\d}x\right)^{\frac{1}{2}} \\
       &\leq C_\beta(x_0,\,R) \eps^{3/2}.
    \end{split}   
    \end{equation}
    Notice that the integral on the left-hand side is over the smaller 
    ball $B'$ rather than over the original ball $B$.
    In turn, since $\psi \equiv 1$ in $B''$, we obtain the improved decay 
    \begin{equation}\label{eq:decay-u2-improved-ter}
        \int_{B''} \left( \eps \abs{\nabla u_2}^2 + \frac{1}{\eps}\left( \abs{\u}^2 - 1 + \sqrt{2}\beta \right) u_2^2 \right)\,{\d}x 
        \leq C_\beta(x_0,\,R) \, \eps^\frac{3}{2}.
    \end{equation}

    \begin{remark}
        Instead of~\eqref{eq:decay-u2-improved}, we could employ~\eqref{eq:decay-u2-improved-bis} in the estimate~\eqref{eq:decay-u2-compu10}. This would give a factor $\eps^\frac{3+\gamma}{2}$ on the right-hand side of~\eqref{eq:decay-u2-improved-ter}, rather than $\eps^\frac{3}{2}$, making the decay of $u_2$ slightly faster. However, as can be easily checked, since $\gamma \in (0,\,1)$, this additional speed of convergence is not enough, according to the iteration argument presented in Step~\ref{step:decay-u2-iteration} below, to yield, after iteration, a further improved decay of $u_2$ with respect to the one achieved by starting from~\eqref{eq:decay-u2-improved-ter}. For this reason, we preferred to keep things as simple as possible, avoiding the use of~\eqref{eq:decay-u2-improved-bis} and so the introduction of a further dependence on $\gamma$ in~\eqref{eq:decay-u2-improved-ter}.
    \end{remark}
\end{step}

\begin{step}[Iteration]\label{step:decay-u2-iteration}
    Recall once again that we assume %at least one of the two conditions 
%    $B(x_0,\,2R) \csubset \Omega \setminus (\spt \mu_\star \cup \spt \nu_\star)$ or  
	$\sqrt{2}\beta = 1 + c > 1$. Under %these assumptions,
	this assumption,~\eqref{eq:decay-u2-improved-ter} implies that
    \begin{equation}\label{eq:decay-u2-iteration-1}
        \int_{B_{1/4}} u_2^2\,{\d}x \leq C_\beta(x_0,\,R) \eps^\frac{5}{2}.
    \end{equation}
    With~\eqref{eq:decay-u2-iteration-1} at hand, we repeat the argument from Step~\ref{step:u2-basic-bounds} to Step~\ref{step:u2-improved-decay}, but with 
    \[
        B = B_{1/4}, \qquad B' = B_{1/8}, \qquad B'' = B_{1/16}.
    \]
    Correspondingly, we take a new cut-off function $\zeta \in C^\infty_c(\R^2)$ still as in~\eqref{eq:cut-off}, but with~\eqref{eq:cut-off-grad} replaced by
    \[
        \abs{\nabla \zeta} \leq \frac{16}{R}, 
        \qquad \abs{\nabla^2 \zeta} \leq \frac{32}{R^2}, \qquad \mbox{in } \R^2.
    \]
    In general, at the $n$-th iteration step, we will take 
        \[
        B = B_{2^{-2n}}, \qquad B' = B_{2^{-2n+1}}, \qquad B'' = B_{2^{-2n-2}} .
    \]
    and $\zeta = \zeta_n$ such that~\eqref{eq:cut-off} holds, as well as
    \begin{equation}\label{eq:cut-off-grad-n}
        \abs{\nabla \zeta} \leq 2^n\left(\frac{2}{R}\right), 
        \qquad \abs{\nabla^2 \zeta} \leq 2^n\left(\frac{4}{R^2}\right), \qquad \mbox{in } \R^2.
    \end{equation}
    At the first iteration, we obtain that~\eqref{eq:decay-u2-compu2},~\eqref{eq:decay-u2-compu6},~\eqref{eq:decay-u2-compu8}, and~\eqref{eq:decay-u2-compu9} can be replaced, respectively, as follows. First, by~\eqref{eq:decay-u2-improved-ter} and~\eqref{eq:Lip-bound-varphi}, 
    \begin{equation}\label{eq:decay-u2-compu11}
        \eps \int_B \zeta^2 u_2^2 \abs{\nabla \varphi}^2 \,{\d}x \lesssim \eps^\frac{7}{2},
    \end{equation}
    By H\"{o}lder's inequality,~\eqref{eq:Delta-varphi},~\eqref{eq:decay-L2-norm-geps}, and~\eqref{eq:norm-u}, we have
    \begin{equation}\label{eq:decay-u2-compu12}
        \eps \int_B u_1 u_2 \zeta^2 \Delta \varphi\,{\d}x \lesssim \eps^2.
    \end{equation}
    By H\"{o}lder's inequality,~\eqref{eq:decay-u2-iteration-1}, 
    and~\eqref{eq:decay-u2-improved-ter}, we have 
    \begin{equation}\label{eq:decay-u2-compu13}
        \eps \int_B (u_2 \nabla \zeta)\cdot (\zeta \nabla u_2) \,{\d}x \lesssim \eps^\frac{5}{2}. 
    \end{equation}
    Finally, H\"{o}lder's inequality,~\eqref{eq:Lip-bound-varphi}, 
    and~\eqref{eq:decay-u2-iteration-1} yield 
    \begin{equation}\label{eq:decay-u2-weakest-improv}
        \eps \int_B \zeta^2 u_2 \nabla u_1\cdot \nabla\varphi\,{\d}x \lesssim \eps^\frac{7}{4}. 
    \end{equation}
    In the above inequalities, the implicit constant on the right-hand side depends only on $x_0$, $R$, $\beta$, $\Omega$, $\Cpot$, and the $L^1(\partial \Omega)$- and the $L^2(\partial \Omega)$-norms of $\Qb \times \partial_\ttau \Qb$. 
    
    Since the decay on the right-hand side of~\eqref{eq:decay-u2-weakest-improv} is the slowest (both at this step and at any other step), it provides the leading order in the improvement of the decay of the left-hand side of~\eqref{eq:decay-u2-improved-ter}. 
    More precisely, combining~\eqref{eq:decay-u2-compu11},~\eqref{eq:decay-u2-compu12},~\eqref{eq:decay-u2-compu13}, and~\eqref{eq:decay-u2-weakest-improv} with~\eqref{eq:decay-u2-compu1}, we see that, after the first iteration step, we obtain  
    \begin{equation}\label{eq:decay-u2-improved-4}
                \int_{B_{1/16}} \left( \eps \abs{\nabla u_2}^2 + \frac{1}{\eps}\left( \abs{\u}^2 - 1 + \sqrt{2}\beta \right) u_2^2 \right)\,{\d}x 
        \leq C_\beta(x_0,\,R) \, \eps^\frac{7}{4}, 
    \end{equation}
    as well as 
    \[
        \int_{B_{1/16}} \left( \eps \abs{\nabla u_2}^2 + \frac{1}{\eps} u_2^2 \right)\,{\d}x 
        \leq C'_\beta(x_0,\,R) \, \eps^\frac{7}{4}.
    \]    
    By a trivial induction argument, we see that at the $n$-th step we have
    \begin{equation}\label{eq:decay-u2-improved-n}
        \int_{B_{2^{-2n-2}}} \left( \eps \abs{\nabla u_2}^2 + \frac{1}{\eps}\left( \abs{\u}^2 - 1 + \sqrt{2}\beta \right) u_2^2 \right)\,{\d}x 
        \leq C_{\beta,n}(x_0,\,R) \, \eps^{\alpha_n},
    \end{equation}
    where the dependence on $n$ arises because of the dependence on $n$ in~\eqref{eq:cut-off-grad-n}, and
    \begin{equation}\label{eq:decay-u2-improved-n-bis}
        \int_{B_{2^{-2n-2}}} \left( \eps \abs{\nabla u_2}^2 + \frac{1}{\eps} u_2^2 \right)\,{\d}x 
        \leq C'_{\beta,n}(x_0,\,R) \, \eps^{\alpha_n}. 
    \end{equation}
    The exponent $\alpha_n$ is given by the iterative formula
    \[
    	\alpha_{n+1} = \frac{\alpha_n + 2}{2}, \qquad \alpha_0 = 1,
    \]
    i.e., 
    \begin{equation}\label{eq:alpha_n}
        \alpha_n := 2-2^{-(n+1)}.
    \end{equation}
\end{step}

\begin{step}[Covering argument and conclusion]
    Let $K \subset \Omega \setminus \spt\mu_\star$ be a compact set. 
    %. Suppose in addition that $1+\sqrt{2}\beta \geq c > 0$ 
    %or that $K \subset \Omega \setminus (\spt\mu_\star \cup \spt \nu_\star)$. 
    Choose $\alpha \in (0,\,3)$ arbitrarily and let $G \csubset K$ be any simply connected, open set with smooth boundary. 
    Let $n_\alpha \in \mathbb{N}$ be the smallest index such that $\alpha \leq \alpha_{n_\alpha}$, where $\alpha_{n_\alpha}$ is given by~\eqref{eq:alpha_n} for $n = n_\alpha$. 
    Then, by~\eqref{eq:decay-u2-improved-n}, we have 
    \[
            \int_{B_{2^{-2{n_\alpha}-2}}} \left( \eps \abs{\nabla u_2}^2 + \frac{1}{\eps} u_2^2 \right)\,{\d}x 
        \leq C'_{\beta,n_\alpha}(x_0,\,R) \, \eps^{\alpha}.
    \]
    Moreover, we can cover $G$ with finitely many balls of radius $2^{{-2 n_\alpha} -2}R$, for $R > 0$ sufficiently small (depending only on $n_\alpha$ and on $\dist(G,\,K)$) that the corresponding concentric balls of radius $2^{-2n_\alpha-1}R$ are still contained in $K$. As a consequence, we obtain that~\eqref{eq:decay-u2-goal1} and~\eqref{eq:decay-u2-goal2} hold, for constants $C_\alpha(\dist(G,\,K))$, $C_\alpha'(\dist(G,\,K))$ depending only on $\dist(G, \partial K)$, $\alpha$, $\beta$, $\Omega$, $\Cpot$, and the $L^1(\partial\Omega)$- and the $L^2(\partial \Omega)$-norm of $\Qb \times \partial_\ttau \Qb$.
\end{step}
\end{proof}

\begin{remark}\label{rk:decay-away-from-singular-sets}
	As it can be checked immediately by inspection of the proof, due to 
	the local uniform convergence $\abs{\u}^2 \to 1 + \sqrt{2}\beta$ 
	in $\Omega \setminus (\spt\mu_\star \cup \spt\nu_\star)$ (which is 
	an obvious consequence of item~\ref{item:unif-conv-M} of Theorem~\ref{thm:B-CDS}), 
	Proposition~\ref{prop:decay-u2} holds without restrictions 
	on $\beta$ if we assume that 
	$K \subset \Omega \setminus (\spt\mu_\star \cup \spt\nu_\star)$.
\end{remark}

\begin{corollary}\label{cor:unif-conv-u2}
	Let $\{(\Q_\eps,\,\M_\eps)\}_\eps$ be a sequence of critical points of 
	$\F_\eps$ satisfying boundary conditions either as in \eqref{hp:bc-dir} or as 
	in~\eqref{hp:bc-mixed}, as well as assumption~\eqref{hp:equibdd-f}. 
    Assume, in addition, that $\sqrt{2}\beta > 1$. 
 %   or, alternatively, assume that $K \subset \Omega \setminus (\spt\mu_\star \cup \spt\nu_\star)$.
    Let $K \subset \Omega \setminus \spt\mu_\star$ be a compact set.  
    and let $G \csubset K$ be any simply connected, open set with smooth boundary.   
    Let $\{\u_\eps\}_\eps$ be the sequence of maps defined in $G$ as prescribed by~\eqref{eq:u}. Then,
    \begin{equation}\label{eq:unif-decay-u2}
    	\norm{u_{2,\eps}}_{L^\infty(G)} \lesssim \eps^{\gamma/2}
    \end{equation}
    for every $\gamma \in (0,\,1)$. In particular, as $\eps \to 0$, 
    \begin{equation}\label{eq:unif-conv-u2}
    	u_{2,\eps} \to 0 \quad \mbox{uniformly in } K.
    \end{equation}
\end{corollary}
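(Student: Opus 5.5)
The plan is to obtain the uniform bound~\eqref{eq:unif-decay-u2} from an interpolation (Gagliardo--Nirenberg) inequality in two dimensions. It combines the strong $L^2$-decay of $u_{2,\eps}$ from Proposition~\ref{prop:decay-u2} with a $W^{2,p}$-type bound on $u_{2,\eps}$ at the natural scale $\eps^{-2}$ coming from equation~\eqref{eq:EL-u2}.

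By Proposition~\ref{prop:decay-u2}, for every $\alpha<2$ and every $G'\csubset K$ containing $G$,
\[
\norm{u_{2,\eps}}_{L^2(G')}^2\lesssim \eps^{1+\alpha},
\qquad
\norm{\nabla u_{2,\eps}}_{L^2(G')}^2\lesssim \eps^{\alpha-1}.
\]
So $\norm{u_2}_{L^2}\lesssim\eps^{3/2-\delta}$ and $\norm{\nabla u_2}_{L^2}\lesssim \eps^{1/2-\delta}$ for arbitrarily small $\delta>0$. In two dimensions, $L^\infty$ is not controlled by $W^{1,2}$, so I will bring in second derivatives.

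From~\eqref{eq:EL-u2}, I would write $\Delta u_2=F_\eps$ and bound $\norm{F_\eps}_{L^2(G')}$.
\begin{itemize}
\item The term $\eps^{-2}(\abs{\u}^2-1+\sqrt2\beta+\dots)u_2$ has $L^2$ norm $\lesssim\eps^{-2}\norm{u_2}_{L^2}\lesssim\eps^{-1/2-\delta}$, using~\eqref{eq:norm-u}.
\item The term $\sqrt2\beta(\kappa_\star-\frac{\rho-1}{\eps})u_2/\eps$ is controlled by $\norm{u_2}_{L^\infty}\le C_\beta$ and Lemma~\ref{lemma:improved-decay-rho}, giving $\lesssim\eps^{-1/2}$.
\item The term $u_2\abs{\nabla\varphi}^2$ is bounded in $L^2$ via~\eqref{eq:Lip-bound-varphi}, after covering by the balls $B'$ of the proof.
\item The term $u_1\Delta\varphi$ is bounded by~\eqref{eq:W22+est-varphi}.
\item The term $\nabla u_1\cdot\nabla\varphi$ is bounded by $\norm{\nabla u_1}_{L^2}\lesssim \eps^{-1/2}$, using $\eps\int\abs{\nabla\u}^2\le C$ from~\eqref{eq:ACeps-equibdd} together with~\eqref{eq:Lip-bound-varphi}.
\end{itemize}
Hence $\norm{\Delta u_2}_{L^2(G')}\lesssim\eps^{-1/2-\delta}$. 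Interior elliptic estimates with a cutoff (whose lower-order terms are controlled by the $W^{1,2}$ bound above) then give $\norm{\nabla^2 u_2}_{L^2(G'')}\lesssim\eps^{-1/2-\delta}$.

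Next I apply the two-dimensional Gagliardo--Nirenberg inequality to $\chi u_2$, with $\chi$ a cutoff equal to $1$ on $G$:
\[
\norm{v}_{L^\infty}\lesssim\norm{v}_{L^2}^{1/2}\norm{v}_{W^{2,2}}^{1/2}.
\]
This yields
\[
\norm{u_2}_{L^\infty(G)}\lesssim \eps^{(3/2-\delta)/2}\,\eps^{(-1/2-\delta)/2}=\eps^{1/2-\delta}.
\]
Since $\delta>0$ is arbitrary, this is $\lesssim\eps^{\gamma/2}$ for every $\gamma\in(0,1)$, which is~\eqref{eq:unif-decay-u2}. The uniform convergence~\eqref{eq:unif-conv-u2} then follows at once. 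To get it on $K$ itself, I would apply the estimate on a slightly larger compact set $K'\subset\Omega\setminus\spt\mu_\star$ containing $K$ in its interior, covering $K$ by finitely many small simply connected $G$'s, such as discs.

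The main obstacle I expect is making the bound on $\nabla u_1\cdot\nabla\varphi$ and on $\Delta\varphi$ legitimate on the nested domains. The Lipschitz bound~\eqref{eq:Lip-bound-varphi} and the $W^{2,2+\gamma/2}$ bound~\eqref{eq:W22+est-varphi} are only stated on balls $B'$ strictly inside the original ones. The cutoff/covering bookkeeping must therefore be arranged so that every term in $F_\eps$ is estimated on a set where these bounds hold. If the $W^{2,2}$ route loses too much, I would fall back on a $W^{1,p}$ version: use $\abs{\nabla u_2}\le C/\eps$ from~\eqref{eq:max-grad-QM} and interpolate directly,
\[
\norm{u_2}_{L^\infty}\lesssim\norm{u_2}_{L^2}^{1/2}\norm{\nabla u_2}_{L^\infty}^{1/2},
\]
which again gives $\eps^{(3/2-\delta)/2}\eps^{-1/2}=\eps^{1/4-\delta/2}$. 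This is weaker but still implies~\eqref{eq:unif-conv-u2}. To reach the claimed rate $\eps^{\gamma/2}$, I would then rely on the $W^{2,2}$ argument above.
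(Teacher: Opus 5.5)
Your argument is correct, and it takes a genuinely different route from the paper's. The paper's proof is a one-liner. It invokes Morrey's inequality in the form $\norm{u_{2,\eps}}_{L^\infty(G)} \lesssim \norm{u_{2,\eps}}_{W^{1,2}(G)}$ and reads off $\eps^{(\alpha-1)/2}=\eps^{\gamma/2}$ from \eqref{eq:decay-u2-goal2}. As you yourself note, that embedding fails in two dimensions: $p=2$ is the borderline case, and Morrey's inequality needs $p>2$. So the paper's shortcut does not justify \eqref{eq:unif-decay-u2} as written, whereas your argument does.

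You supply the missing regularity from the equation. From \eqref{eq:EL-u2} you get $\norm{\Delta u_{2,\eps}}_{L^2}\lesssim \eps^{-1/2-\delta}$, where the term $\eps^{-2}\norm{u_{2,\eps}}_{L^2}$ dominates. Agmon's inequality $\norm{v}_{L^\infty(\R^2)}\lesssim \norm{v}_{L^2}^{1/2}\norm{\nabla^2 v}_{L^2}^{1/2}$ (the admissible exceptional case of Gagliardo--Nirenberg) then gives $\eps^{1/2-\delta}$. This is exactly the rate the paper asserts, so your route buys a valid proof of the full statement at the cost of a few more lines.

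The bookkeeping you worry about is milder than you fear. All the bounds you use are interior and local, so covering by discs suffices; on each disc $\abs{u_{2,\eps}}$ does not depend on the choice of eigenframe. Several ingredients can also be obtained more cheaply:
\begin{itemize}
\item The term $u_{2,\eps}\abs{\nabla\varphi_\eps}^2$ only needs \eqref{eq:varphi-Lp-bounds} with $p=4$.
\item The bound $\Delta\varphi_\eps\in L^2_{\rm loc}$ can be read directly off \eqref{eq:varphi}, since $\rho_\eps\ge 1/2$ and $g_\eps$ is bounded in $L^2$ by \eqref{eq:decay-u2-intermediate}.
\item For the term $\nabla u_{1,\eps}\cdot\nabla\varphi_\eps$, the Lipschitz bound \eqref{eq:Lip-bound-varphi} is not needed. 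You can interpolate $\abs{\nabla u_{1,\eps}}\lesssim\eps^{-1}$ with $\eps\int\abs{\nabla u_{1,\eps}}^2\lesssim 1$ and use \eqref{eq:varphi-Lp-bounds}, losing only an arbitrarily small amount in the exponent.
\end{itemize}

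Your fallback $\norm{v}_{L^\infty}\lesssim\norm{v}_{L^2}^{1/2}\norm{\nabla v}_{L^\infty}^{1/2}$ is also sound. It uses $\abs{\nabla u_{2,\eps}}\lesssim\abs{\nabla\M_\eps}+\abs{\nabla\varphi_\eps}\lesssim\eps^{-1}$, which follows from \eqref{eq:max-grad-QM} (where $\nabla\M_\eps$ is clearly meant) and from $2\rho_\eps\abs{\nabla\varphi_\eps}\le\abs{\nabla\Q_\eps}$ in \eqref{eq:dec-grad-Q-square}. It yields only $\eps^{1/4-}$, but that already gives \eqref{eq:unif-conv-u2} and is the cheapest rigorous route to the uniform convergence.
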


\begin{proof}
	Since $G$ has smooth boundary, it follows from Morrey's inequality 
	that
	\[
		\norm{u_{2,\eps}}_{L^\infty(G)} \lesssim \norm{u_{2,\eps}}_{W^{1,2}(G)}. 
	\]
	Then,~\eqref{eq:unif-decay-u2} follows immediately from~\eqref{eq:decay-u2-goal2}. 
	Now, for any compact set 
	$K \subset \Omega \setminus \spt\mu_\star$, we can find another compact 
	$K'$ and a simply connected, open set $G$ with smooth boundary such 
	that $K \subset G \csubset K'$. Then,~\eqref{eq:unif-decay-u2} holds 
	in $G$ and implies $\norm{u_{2,\eps}}_{L^\infty(K)} \lesssim \eps^{\gamma}$ for 
	any $\gamma \in (0,\,1)$, so that~\eqref{eq:unif-conv-u2} follows 
	immediately by taking the limit as $\eps \to 0$. 
\end{proof}

\begin{remark}
	Besides specifying the rate of convergence, the point 
	of Corollary~\ref{cor:unif-conv-u2} is that the component 
	$\{u_{2,\eps}\}$ converges uniformly to zero locally everywhere 
	away from $\spt \mu_\star$, and so even across 
	the energy concentration set for the $\M$-component, i.e., 
	even on $\spt \nu_\star$.
\end{remark}

We now combine Proposition~\ref{prop:decay-u2} and Corollary~\ref{cor:unif-conv-u2} 
so as to obtain an improved decay for $u_{2,\eps}^4$, which will be useful 
in Section~\ref{sec:integrality}. 
\begin{corollary}\label{cor:decay-u2^4}
    Let $\{(\Q_\eps,\,\M_\eps)\}_\eps$ be a sequence of critical points of 
	$\F_\eps$ satisfying boundary conditions either as in \eqref{hp:bc-dir} or as 
	in~\eqref{hp:bc-mixed}, as well as assumption~\eqref{hp:equibdd-f}. 
	Assume, in addition, that $\sqrt{2}\beta > 1$.  
    Let $K \subset \Omega \setminus \spt\mu_\star$ be any compact set   
    %or, alternatively, assume that $K \subset \Omega \setminus (\spt\mu_\star \cup \spt\nu_\star)$.
    and let $G \csubset K$ be any simply connected, open set, with smooth boundary $\partial G$.   
    Let $\{\u_\eps\}_\eps$ be the sequence of maps defined in $G$ as prescribed by~\eqref{eq:u}. Then, for any $\gamma \in (0,\,1)$ and any $\eps$ small enough, depending only on $K$ and $\beta$, we have
    \begin{equation}\label{eq:decay-u2-goal3}
        \int_G u_{2,\eps}^4 \,{\d}x 
        \leq C_\gamma(\dist(G,\,\partial K)) \eps^{4\gamma}.
    \end{equation}
    where $C_\gamma(\dist(G,\,\partial K))$ depends only on 
    $\dist(G, \partial K)$, $\gamma$, $\beta$, $\Omega$, $\Cpot$, 
    and the $L^1(\partial\Omega)$- and the $L^2(\partial\Omega)$-norms of $\Qb$. 
\end{corollary}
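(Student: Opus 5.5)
The plan is to interpolate between the $L^2$ decay of Proposition~\ref{prop:decay-u2} and the uniform decay of Corollary~\ref{cor:unif-conv-u2}. We split
\[
	\int_G u_{2,\eps}^4\,{\d}x \leq \norm{u_{2,\eps}}_{L^\infty(G)}^2 \int_G u_{2,\eps}^2\,{\d}x
\]
and bound the two factors separately. For the second factor, \eqref{eq:decay-u2-goal2} with exponent $\alpha\in(0,2)$ gives $\int_G u_{2,\eps}^2\,{\d}x \leq C'_\alpha\,\eps^{1+\alpha}$. Choosing $\alpha$ close to $2$, the right-hand side is $\lesssim \eps^{3-\delta}$ for any small $\delta>0$.

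For the $L^\infty$ factor we apply Corollary~\ref{cor:unif-conv-u2}, after enlarging $G$ slightly to a smooth simply connected $G'$ with $G\csubset G'\csubset K$, so that the constants depend only on $\dist(G,\partial K)$. This yields $\norm{u_{2,\eps}}_{L^\infty(G)}\lesssim \eps^{\gamma'/2}$ for every $\gamma'\in(0,1)$. Combining,
\[
	\int_G u_{2,\eps}^4\,{\d}x \lesssim \eps^{\gamma' + 1 + \alpha}.
\]
The exponent $\gamma'+1+\alpha$ can be made arbitrarily close to $4$, and in particular larger than $4\gamma$ for any given $\gamma<1$. Since $\eps\le 1$, a larger exponent gives the claimed bound \eqref{eq:decay-u2-goal3}. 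The restriction ``$\eps$ small enough'' enters only through the thresholds in the cited results, which are needed for $\rho_\eps\ge 1/2$ in $K$ and hence for $\u_\eps$ to be defined.

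The main point to check is the $L^\infty$ estimate. Morrey's inequality in two dimensions does not embed $W^{1,2}$ into $L^\infty$, so Corollary~\ref{cor:unif-conv-u2} should be read via an interpolation inequality. One option is the two-dimensional Agmon-type bound $\norm{u}_\infty^2\lesssim \norm{u}_{2}\norm{u}_{W^{2,2}}$. Another is a $W^{1,p}$ bound for some $p>2$, obtained by elliptic regularity for \eqref{eq:EL-u2} using the Lipschitz bound \eqref{eq:Lip-bound-varphi} on $\varphi$ and the $W^{2,2}$ control on $\varphi$.

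If this step proves delicate, there is a fallback that avoids $L^\infty$ entirely. Apply the Gagliardo--Nirenberg (Ladyzhenskaya) inequality $\norm{u}_{L^4}^4\lesssim \norm{u}_{L^2}^2\norm{u}_{W^{1,2}}^2$ on $G$, with both norms taken from \eqref{eq:decay-u2-goal2}. There $\norm{u_2}_{L^2}^2\lesssim\eps^{1+\alpha}$ and $\norm{\nabla u_2}_{L^2}^2\lesssim\eps^{\alpha-1}$. The fallback is only a partial safeguard, because as $\alpha\to2$ these give merely $\eps^{2\alpha}\to\eps^4$, and exponents near $4$ are exactly what are needed for $\eps^{4\gamma}$ with $\gamma$ close to $1$. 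Reaching such exponents requires the sharp form of the decay at every stage, so either route has to be tracked carefully.
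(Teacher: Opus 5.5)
Your main route is the paper's own argument. The paper bounds $\int_G u_{2,\eps}^4\,{\d}x \lesssim \norm{u_{2,\eps}}_{L^\infty(G)}^2\int_G u_{2,\eps}^2\,{\d}x$, takes the $L^\infty$ factor from Corollary~\ref{cor:unif-conv-u2}, and takes the $L^2$ factor from \eqref{eq:decay-u2-goal2}.

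Your objection to the $L^\infty$ factor is correct, and it applies to the paper itself. Corollary~\ref{cor:unif-conv-u2} is proved there by asserting $\norm{u_{2,\eps}}_{L^\infty(G)}\lesssim\norm{u_{2,\eps}}_{W^{1,2}(G)}$ ``by Morrey's inequality'', which is false in two dimensions. So as long as you cite that corollary, your main route rests on an unproved step.

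The Agmon repair can be made to work, but it needs an ingredient you have not supplied: a localised bound $\norm{u_{2,\eps}}_{W^{2,2}}\lesssim\eps^{-1/2-\delta}$ read off from \eqref{eq:EL-u2}. The dominant terms are $\eps^{-2}(\cdots)u_{2,\eps}$ and $\nabla u_{1,\eps}\cdot\nabla\varphi_\eps$. Each is of size about $\eps^{-1/2}$ in $L^2$, by \eqref{eq:decay-u2-goal2}, \eqref{eq:ACeps-equibdd}, and \eqref{eq:Lip-bound-varphi}.

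You do not need any of this, because your fallback is a complete proof, not a partial one. The statement asks, for each fixed $\gamma\in(0,1)$, for the exponent $4\gamma$; it does not ask for an exponent uniformly close to $4$. By Ladyzhenskaya's inequality on $G$ and \eqref{eq:decay-u2-goal2},
\[
	\int_G u_{2,\eps}^4\,{\d}x \lesssim \norm{u_{2,\eps}}_{L^2(G)}^2\left(\norm{\nabla u_{2,\eps}}_{L^2(G)}^2+\norm{u_{2,\eps}}_{L^2(G)}^2\right) \lesssim \eps^{1+\alpha}\,\eps^{\alpha-1} = \eps^{2\alpha}.
\]
Choosing $\alpha=2\gamma\in(0,2)$, which Proposition~\ref{prop:decay-u2} permits, gives exactly $\eps^{4\gamma}$.

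This argument uses only the energy estimate \eqref{eq:decay-u2-goal2} and never passes through an $L^\infty$ bound. It therefore sidesteps the defective Morrey step entirely, and it should be your main argument rather than a safeguard.

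One detail on the constant. To make it depend on $\dist(G,\partial K)$ rather than on the shape of $G$, apply the inequality on finitely many balls of radius $\tfrac12\dist(G,\partial K)$ centred in $G$. Each such ball lies in $K$, and $u_{2,\eps}^2$ does not depend on the choice of eigenframe. The paper's Morrey constant on $G$ has the same shape-dependence issue.
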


\begin{proof}
	Let $K$ and $G$ be as in the statement. Then, by Corollary~\ref{cor:unif-conv-u2}, 
	we have 
	\[
		u_{2,\eps}^2 \lesssim \eps^\gamma, \qquad \forall \gamma \in (0,\,1).
	\] 
	Thus, 
	\[
		\int_G u_{2,\eps}^4 \,{\d}x \lesssim \eps^\gamma \int_G u_{2,\eps}^2 \,{\d}x,   
	\]
	and the conclusion follows from~\eqref{eq:decay-u2-goal2}.
\end{proof}

%\begin{proof}
%    Let $K \subset \Omega \setminus \spt\mu_\star$ be any compact set and 
%    assume that $\sqrt{2}\beta > 1$. 
%    By the clearing-out result for $\rho_\eps$, we know that there exists 
%    $\bar{\eps} := \bar{\eps}(K,\,\beta)$ such that we have 
%    \[
%        \rho_\eps \geq \frac{1}{\sqrt{2}\beta} \qquad \mbox{in } K,
%    \]
%    for any $0 < \eps \leq \bar{\eps}$. But then we have
%    \[
%        \abs{\u_\eps}^2 - 1 + \sqrt{2}\beta \rho_\eps 
%        = u_{1,\eps}^2 + u_{2,\eps}^2 - 1 + \sqrt{2}\beta \rho_\eps 
%        \geq u_{2,\eps}^2 \qquad \mbox{in } K,
%    \]
%    so that the conclusion follows from~\eqref{eq:decay-u2-goal1}.
%\end{proof}

%
%\begin{remark}\label{rk:u2-no-concentration}
%    The main outcome of Proposition~\ref{prop:decay-u2} is that 
%    the component $u_{2,\eps}$ \emph{cannot} concentrate energy 
%    in the regime $\sqrt{2} \beta > 1$. 
%    Consequently, as we are going to see in Section~\ref{sec:integrality}, 
%    it does not contribute to the limiting varifold $\bbV_\star$. 
%\end{remark}

%------------------------------------------------
\section{Proof of Theorem~\ref{mainthm:integrality}}\label{sec:integrality}

In this section, we prove Theorem~\ref{mainthm:integrality}.
%\begin{theorem}\label{thm:integrality}
%    Let $\zeta_\star = \mathfrak{v}_\star\,\H^1 \mres \mathfrak{S}_\star$ 
%    be the limiting potential energy density and let 
%    $\bbV_\star = \v(\mathfrak{S}_\star,\,\mathfrak{v}_\star)$ be the 
%    varifold associated with $\zeta_\star$ according to Theorem~\ref{thm:B-CDS}. 
%    Let 
%    \[
%        \sigma_\beta := 
%        %\frac{1}{\sqrt{1+\sqrt{2}\beta}}
%        %\int_{-\sqrt{1+\sqrt{2}\beta}}^{\sqrt{1+\sqrt{2}\beta}} \sqrt{\frac{h_1(s)}{2}}\,{\d}s =
%        \frac{\sqrt{2}}{3}\left( 1 + \sqrt{2}\beta \right)^{3/2},
%    \]
%    where $h_1$ is the function defined in~\eqref{eq:h1-h2}. Assume that 
%    $\sqrt{2}\beta > 1$. 
%    Then, the varifold 
%    \[
%    	\sigma_\beta^{-1} \bbV_* := \left( \mathfrak{S_*},\,\frac{\mathfrak{v}_\star}{ \sigma_\beta} \right)
%    \]
%    has integer multiplicity.
%\end{theorem}

Before going to the details of the proof, we 
recall that, if $\sqrt{2}\beta > 1$, then the function $h_2$ 
in~\eqref{eq:h1-h2} is non-negative, for every value of $u_1$, $u_2$. 
Thus, if   
$B = B(x_0,\,R) \csubset \Omega \setminus \spt\mu_\star$ is any ball 
and $\{ (\Q_\eps,\,\M_\eps) \}_\eps$ 
is any sequence of critical points of 
$\F_\eps$, then, by Proposition~\ref{prop:decay-u2} and~\eqref{eq:norm-u}, 
we have 
\begin{equation}\label{eq:decay-h2}
    \frac{1}{\eps}\int_B h_2(u_{1_\eps}, u_{2,\eps}) \,{\d}x  
    = {\rm O}_{\eps \to 0}(\eps^{\alpha}) 
    \qquad \mbox{for all } \alpha \in (0,\,2).
\end{equation} 
In analogy with the definition~\eqref{eq:zeta-eps}, and taking~\eqref{eq:decay-h2} 
into account, we set 
\[
	\zeta_\eps^{(h)} := \frac{1}{\eps} h(u_{1,\eps}), \qquad 
	\zeta_\eps^{(h_1)} := \frac{1}{\eps} h_1(u_{1,\eps}).
\]
By~\eqref{eq:ACeps-equibdd}, it then follows that 
\[
	\zeta_\eps^{(h)} \rightharpoonup^* \zeta_\star^{(h)}, \qquad 
	\zeta_\eps^{(h_1)} \rightharpoonup^* \zeta_\star^{(h_1)} 
	\qquad \mbox{as } \eps \to 0.
\]
On the other hand, by~\eqref{eq:decay-h2}, we have  
\[
	\int_K \abs{\zeta_\eps^{(h)} - \zeta_\eps^{(h_1)}} \,{\d}x 
	\to 0 \qquad \mbox{as } \eps \to 0,
\]
for any compact set $K \subset \Omega \setminus \spt\mu_\star$, 
and thus the limiting varifold associated with the \emph{vectorial} potential 
potential $h$ is integral if and only if the one associated with 
the \emph{scalar} potential $h_1$ is. On the other hand, we already know 
that $\zeta_\star^{(h)} = \zeta_\star$ in $\Omega \setminus \spt\mu_\star$. 
In the proof of Theorem~\ref{mainthm:integrality} below, we prove 
that the density $\mathfrak{h}_\star$ of $\zeta_\star^{(h_1)}$ 
exists at every point $x_0$ in $\Omega \setminus \spt \mu_\star$ 
and that $\sigma_\beta^{-1} \mathfrak{h}_\star(x_0)$ is an integer. 
Then, we conclude that $\sigma_\beta^{-1} \mathfrak{v}_\star(x_0)$ 
takes only integer values by showing that the difference 
$\zeta_\eps - \zeta_\eps^{(h_1)}$ vanishes in $L^1(K)$ as $\eps \to 0$, 
for every $K \subset \Omega \setminus \spt \mu_\star$,   
and using that $\zeta_\star(\spt \mu_\star) = 0$. (This last property 
follows from the monotonicity of $\zeta_\star$ --- 
see \cite[Proposition~3.18(ii)]{CDS2}.)

\begin{proof}[{Proof of Theorem~\ref{mainthm:integrality}}]
	The equation satisfied by the component $u_{1,\eps}$ is obtained 
	by projecting~\eqref{eq:EL-u} along the direction $\e_1 := (1,\,0)^{\rm t}$ and reads as follows:
    \begin{equation}\label{eq:EL-u1}
    \begin{split}
        -\eps \Delta u_{1,\eps} &+ \frac{1}{\eps}\left(u_{1,\eps}^2 - 1 - \sqrt{2}\beta \rho_\eps \right)u_{1,\eps} \\
        &= \eps \left( -u_{1,\eps} \abs{\nabla \varphi_\eps}^2 - \nabla u_{2,\eps} \cdot \nabla \varphi_\eps - u_{2,\eps} \Delta \varphi_\eps \right) - \frac{1}{\eps}u_{2,\eps}^2 u_{1,\eps}.
    \end{split}
    \end{equation}
    This equation holds pointwise in any simply 
    connected, open set $G \csubset \Omega \setminus \spt\mu_\star$ 
    with smooth boundary $\partial G$. Since $\sigma_\beta$ is a constant 
    which depends only on $\beta$, the integrality claim in the 
    statement is local 
    in nature, and therefore we can localise the argument. More precisely, 
    the strategy goes as follows. First, working in balls 
    $B = B(x_0,\,R) \csubset \Omega \setminus \spt\mu_\star$ and relying on the 
    results in Section~\ref{sec:decay-u2}, we recast~\eqref{eq:EL-u1} 
    in the form considered~\cite{NagaseTonegawa}, see~\eqref{eq:EL-u1-bis} 
    below, obtaining at the same time the required decay estimates for the perturbation 
    term on the right-hand side.  
    This allows us to apply the machinery in~\cite{NagaseTonegawa} in any compact set 
    $K \subset \Omega \setminus \spt\mu_\star$.
    Then, from~\cite{NagaseTonegawa} we infer that 
    $\mathfrak{v}_\star(x_0)$ is an integer multiple of $\sigma_\beta$ for 
    any $x_0 \in K$, so that the conclusion follows by letting 
    $K$ vary in $\Omega \setminus \spt\mu_\star$ and recalling that 
    $\mathfrak{v}_\star = 0$ on $\spt \mu_\star$.
    
%    We now proceed to rewrite~\eqref{eq:EL-u1} in a suitable form 
%    to apply the integrality result in \cite{NagaseTonegawa}.
    
    \setcounter{step}{0}
    \begin{step}[Rewriting~\eqref{eq:EL-u1}]
	The aim of this step is to rewrite~\eqref{eq:EL-u1} in the form
    \begin{equation}\label{eq:EL-u1-bis}
    	-\eps \Delta \widetilde{u}_\eps + \frac{1}{\eps}\left( \widetilde{u}_\eps^2 - 1 - \sqrt{2}\beta \right)\widetilde{u}_\eps 
    	= F_\eps\left(\widetilde{u}_\eps,\,u_{2,\eps}\right),
    \end{equation}
    where
    \begin{equation}\label{eq:widetilde-u}
		\widetilde{u}_\eps := \lambda_\eps u_{1,\eps}, \qquad 
		\lambda_\eps := \left(\frac{1 + \sqrt{2}\beta}{1+\sqrt{2\beta} + \eps \sqrt{2}\beta \kappa_\star}\right)^{1/2} ,
    \end{equation}
    and $F_\eps\left( \widetilde{u}_\eps,\,u_{2,\eps} \right)$, 
    defined in~\eqref{eq:capitolF} below, 
    satisfies the local estimate
   	\begin{equation}\label{eq:decay-capitolF}
		\frac{1}{\eps} \int_K F_\eps^2\,{\d}x \leq C_{\beta,K},
   	\end{equation}
   	in any compact set $K \subset \Omega \setminus \spt\mu_\star$, 
   	where $C_{\beta,K}$ is a constant depending only on $\beta$, $K$, 
   	$\Omega$, $\Cpot$, and the $L^1(\partial \Omega)$- and 
   	the $L^2(\partial \Omega)$-norm of $\Qb \times \partial_\ttau \Qb$.
   	
   	\medskip
    \noindent
    Towards the announced purpose, we start by setting (for ease of notation, 
    we drop the subscripts $\eps$ in the intermediate formulae below, writing 
    $u_1$, $u_2$, $\varphi$ in place of $u_{1,\eps}$, $u_{2,\eps}$, $\varphi_\eps$)
    \begin{align*}
        g_\eps(u_1,\,u_2) &:= -\eps\left(u_1 \abs{\nabla \varphi}^2 + \nabla u_2 \cdot \nabla \varphi + u_2 \Delta \varphi\right), 
        \\
        f_\eps &:= g_\eps - \frac{u_2^2 u_1}{\eps},
    \end{align*}
    which leads to
   \begin{equation}\label{eq:u1-pert}
        -\eps \Delta u_1 + \frac{1}{\eps}\left(u_1^2 - 1 - \sqrt{2}\beta \rho \right)u_1 = f_\eps(u_1,\,u_2).
    \end{equation}
    We claim that, for any ball $B = B(x_0,\,R) \csubset \Omega \setminus \spt\mu_\star$, 
    \begin{equation}\label{eq:decay-f}
        \frac{1}{\eps}\int_B f_\eps^2\,{\d}x \leq C_\beta(x_0,\,R) \eps^\gamma, 
        \qquad \forall \gamma \in (0,\,1),
    \end{equation}
    for a constant $C_\beta(x_0,\,R)$  
    depending only $\beta$, $x_0$, $R$, $\Omega$, $\Cpot$, 
    and the $L^1(\partial \Omega)$- and the $L^2(\partial \Omega)$-norm 
    of $\Qb \times \partial_\ttau \Qb$.
    
	In order to prove the claim, we first observe that it follows 
	from~\eqref{eq:decay-u2-goal3} and~\eqref{eq:norm-u} that 
    \begin{equation}\label{eq:decay-u2^4}
        \frac{1}{\eps}\int_B \frac{u_2^4 u_1^2}{\eps^2}\,{\d}x \leq 
        C_\beta(x_0,\,R) \eps^\gamma, \qquad \forall \gamma \in (0,\,1),
    \end{equation}
    for every $\eps$ small enough, depending on $K$ and $\beta$ only. 
    In order to bound $g_\eps$, we observe that 
    by an argument similar to the one in 
    Step~\ref{step:bounds-Delta-phi}, Step~\ref{step:Lip-bounds-phi}, 
    and Step~\ref{step:decay-u2-iteration}
    in the proof of Proposition~\ref{prop:decay-u2}, we may assume that 
    \begin{gather*}
        \norm{\nabla \varphi}_{L^\infty(B)} \leq C_\beta(x_0,\,R), \qquad 
        \norm{\Delta \varphi}_{L^2(B)} \leq C_\beta(x_0,\,R),  \\ 
        \norm{\nabla u_2}_{L^2(B)} \leq  C_{\beta,\gamma}(x_0,\,R) \eps^{\gamma}, 
        \qquad \forall \gamma \in (0,\,1),
    \end{gather*}
    for constants $C_\beta(x_0,\,R)$, $C_{\beta,\gamma}(x_0,\,R)$ 
    depending only $\beta$, $\gamma$, $x_0$, $R$, $\Omega$, $\Cpot$, 
    and the $L^1(\partial \Omega)$- and the $L^2(\partial \Omega)$-norm 
    of $\Qb \times \partial_\ttau \Qb$. From these bounds, it follows that
    \begin{equation}\label{eq:decay-geps}
        \norm{g_\eps}_{L^2(B)} \leq C_\beta(x_0,\,R)\,\eps.
    \end{equation}
    From~\eqref{eq:decay-u2^4} and~\eqref{eq:decay-geps},~\eqref{eq:decay-f} follows.
    
	Now, we take care of the fact that, as written, the potential term 
	on the right-hand side of~\eqref{eq:u1-pert} depends on $\rho$ 
	(and thus implicitly on both $\eps$ and $x$). To this purpose,
	we further rewrite~\eqref{eq:u1-pert} as follows:
    \begin{equation}\label{eq:u1-pert-bis}
    -\eps \Delta u_1 + \frac{1}{\eps}\left(u_1^2 - 1 - \sqrt{2}\beta - \eps \sqrt{2}\beta \kappa_\star \right)u_1 
    = \sqrt{2}\beta \left( \frac{\rho-1}{\eps} - \kappa_\star \right) u_1 + f_\eps(u_1,\,u_2).
	\end{equation}
	Notice that, by Lemma~\ref{lemma:improved-decay-rho}, 
	\begin{equation}\label{eq:decay-rho-pot}
   \frac{1}{\eps} \int_B \left( \frac{\rho-1}{\eps} - \kappa_\star \right)^2 u_1^2 \,{\d}x \lesssim 
   \frac{1}{\eps} \int_B \left( \frac{\rho-1}{\eps} - \kappa_\star \right)^2 \,{\d}x
   \leq C_\beta(x_0,\,R).
	\end{equation}
	Next, as anticipated in~\eqref{eq:widetilde-u}, we set 
	\[
		\widetilde{u}_\eps := \lambda_\eps u_{1,\eps}, 
		\qquad
		\lambda_\eps := \left(\frac{1 + \sqrt{2}\beta}{1+\sqrt{2\beta} + \eps \sqrt{2}\beta \kappa_\star}\right)^{1/2}.
	\]
	Rewriting~\eqref{eq:u1-pert-bis} in terms of $\widetilde{u} = \widetilde{u}_\eps$, 
    we obtain
    \begin{equation}\label{eq:u1-pert-ter}
    \begin{split}
    -\eps \Delta \widetilde{u} + \frac{1}{\eps}\left(\widetilde{u}^2 - 1 - \sqrt{2}\beta \right)\widetilde{u} = &\left( 1- \frac{1}{\lambda_\eps^2} \right)\frac{1}{\eps}\left(\widetilde{u}^2 - 1 - \sqrt{2}\beta \right)\widetilde{u} \\
    &+ \sqrt{2}\beta \lambda_\eps \left( \frac{\rho-1}{\eps} - \kappa_\star \right) \widetilde{u} \\
    &+ \lambda_\eps^2 f_\eps(u_1,\,u_2).
    \end{split}
	\end{equation}
	By a straightforward computation, we have
	\[
		\frac{1}{\eps} \left( 1- \frac{1}{\lambda_\eps^2} \right)^2 \int_B \frac{1}{\eps^2}\left(\widetilde{u}^2 - 1 - \sqrt{2}\beta \right)^2\widetilde{u}^2 \,{\d}x = \frac{\beta^2}{8} \int_B \frac{1}{\eps}\left(\widetilde{u}^2 - 1 - \sqrt{2}\beta \right)^2\widetilde{u}^2 \,{\d}x .
	\]	
	Recalling that $\abs{\u}^2$ is uniformly bounded depending only on $\beta$, 
	that $\lambda_\eps \to 1$ as $\eps \to 0$, and that $\kappa_\star = \frac{\beta}{2\sqrt{2}}\left( 1 + \sqrt{2}\beta \right)$, 
	we see that
	\begin{equation}\label{eq:decay-widetilde-u}
		\frac{1}{\eps} \left( 1- \frac{1}{\lambda_\eps^2} \right)^2 \int_B \frac{1}{\eps^2}\left(\widetilde{u}^2 - 1 - \sqrt{2}\beta \right)^2\widetilde{u}^2 \,{\d}x \leq C(\beta) \int_B \frac{1}{\eps}h(\u)\,{\d}x \stackrel{\eqref{eq:h-bis}}{\leq} C_\beta(x_0,\,R),
	\end{equation} 
	where the constant $C(\beta)$ depends only on $\beta$ and $C_\beta(x_0,\,R)$ 
	depends only on $\beta$, $\gamma$, $x_0$, $R$, $\Omega$, $\Cpot$, 
    and the $L^1(\partial \Omega)$- and the $L^2(\partial \Omega)$-norm 
    of $\Qb \times \partial_\ttau \Qb$.
	Upon setting
	\begin{equation}\label{eq:capitolF}
	%\begin{split}
		F_\eps\left( \widetilde{u},\,u_2\right) 
		:=  \frac{\beta}{2\sqrt{2}}\left(\widetilde{u}^2 - 1 - \sqrt{2}\beta \right)\widetilde{u} 
		+ \sqrt{2}\beta\lambda_\eps \left( \frac{\rho-1}{\eps} - \kappa_\star \right) \widetilde{u} 
		 + f_\eps(u_1,\,u_2).
	%\end{split}
	\end{equation}
	from~\eqref{eq:decay-f},~\eqref{eq:decay-rho-pot}, and~\eqref{eq:decay-widetilde-u} 
	we see that
	\begin{equation}\label{eq:decay-capitolF-local}
		\frac{1}{\eps} \int_B F_\eps^2\,{\d}x \leq C_\beta(x_0,\,R),
	\end{equation}
	where the constant $C_\beta(x_0,\,R)$ depends only on 
	$\beta$, $\gamma$, $x_0$, $R$, $\Omega$, $\Cpot$, 
    and the $L^1(\partial \Omega)$- and the $L^2(\partial \Omega)$-norm 
    of $\Qb \times \partial_\ttau \Qb$. 
    Finally, let $K \subset \Omega \setminus \spt\mu_\star$ be any compact set. 
	Then, by~\eqref{eq:decay-capitolF-local} and a standard covering argument, we have 
	\[
		\frac{1}{\eps}\int_K F_\eps^2\,{\d}x \leq C_{\beta,K} 
	\]
	where the constant $C_{\beta,K}$, depends only on $\beta$, $K$, 
	$\Omega$, $\Cpot$, and the $L^1(\partial \Omega)$- and the 
	$L^2(\partial \Omega)$-norm of $\Qb \times \partial_\ttau \Qb$. 
	This proves~\eqref{eq:decay-capitolF}. 
	Combining~\eqref{eq:u1-pert-ter} and~\eqref{eq:capitolF}, 
	we obtain~\eqref{eq:EL-u1-bis}. This concludes 
	the proof of the claim.   
	\end{step}
	
	\begin{step}[Inferring integrality from \cite{NagaseTonegawa}, locally in $\Omega \setminus \spt\mu_\star$]
	Let $K \subset \Omega \setminus \spt \mu_\star$ be any compact set. 
	We observe that Equation~\eqref{eq:EL-u1-bis} has the same form 
	as \cite[Equation~(1.2)]{NagaseTonegawa} with $F_\eps$ that, as a function 
	of $x \in K$, satisfies \cite[Assumption~(A.2)]{NagaseTonegawa}. 
	By the definition of $\widetilde{u}_{1,\eps}$, the energy 
	bound \cite[(A.1)]{NagaseTonegawa} is satisfied because 
	of~\eqref{eq:ACeps-equibdd} and~\eqref{eq:tildeetaeps-etaeps-pointwise} below.   
	We consider the energy densities 
	(since at this point the component $u_2$ has been ruled out, we further 
	simplify the notation writing $u$ in place of $u_1$)
	\[
		\widetilde{\eta}_\eps := 
		\frac{\eps\abs{\nabla \widetilde{u}_{\eps}}^2}{2} 
		+ \frac{\left(\widetilde{u}_\eps^2-1-\sqrt{2}\beta\right)^2}{4 \eps},  
	\]
	as well as 
	\[
		\eta_\eps := \frac{\eps\abs{\nabla u_{\eps}}^2}{2} 
		+ \frac{\left(u_\eps^2-1-\sqrt{2}\beta\right)^2}{4 \eps}.
	\]
	In view of the definition~\eqref{eq:widetilde-u} of $\widetilde{u}_\eps$, 
	for every $\eps > 0$, we have 
	\begin{equation}\label{eq:tilde-zeta-h1-eps-zeta-h1-eps}
	\begin{split}
		\widetilde{\zeta}^{(h_1)}_\eps - \zeta^{h_1}_\eps &= 
		\frac{1}{\eps}( h_1(\widetilde{u}_\eps) - h_1(u_\eps) ) \\
		&= \frac{1}{4\eps} \left\{ \left(\lambda_\eps^2 u^2_\eps - 1 - \sqrt{2}\beta \right)^2 - \left(u^2_\eps - 1 - \sqrt{2}\beta \right)^2 \right\} \\
		&= \frac{u_\eps^2}{4\eps}\left\{ \left(\lambda_\eps^2 - 1\right)^2 u_\eps^2 + 2\left(1-\lambda_\eps^2\right)^2 \left(u_\eps^2-1-\sqrt{2}\beta\right)  \right\},
	\end{split}
	\end{equation}
	so that, by the definition of $\lambda_\eps$ in~\eqref{eq:widetilde-u} 
	and~\eqref{eq:ACeps-equibdd},
	\begin{equation}\label{eq:tilde-zeta-h1-eps-zeta-h1-eps-limit}
		\int_K \abs{\widetilde{\zeta}^{(h_1)}_\eps - \zeta^{(h_1)}_\eps}\,{\d}x 
		\to 0, \qquad \mbox{as } \eps \to 0.
	\end{equation}
	On the other hand, 
	\begin{equation}\label{eq:tilde-nabla-u-nabla-u}
		\abs{\nabla \widetilde{u}_\eps}^2 - \abs{\nabla u_\eps}^2 
		= \left( \lambda_\eps^2 - 1 \right) \abs{\nabla u_\eps}^2,
	\end{equation}
	whence
	\begin{equation}\label{eq:tilde-nabla-u-nabla-u-limit}
		\int_K \left( \abs{\nabla \widetilde{u}_\eps}^2 - \abs{\nabla u_\eps}^2\right)\,{\d}x 
		\to 0, \qquad \mbox{as } \eps \to 0.
	\end{equation}
	From~\eqref{eq:tilde-zeta-h1-eps-zeta-h1-eps-limit} 
	and~\eqref{eq:tilde-nabla-u-nabla-u-limit} it follows that 
	\begin{equation}\label{eq:tildeetaeps-etaeps-pointwise}		
	\widetilde{\eta}_\eps - \eta_\eps 
		= \eps\lambda\left(\lambda_\eps^2-1\right)\abs{\nabla u_\eps}^2 
		- \frac{u^2_\eps}{4\eps}\left\{ \left(\lambda_\eps^2 - 1\right)^2 u_\eps^2 + 2\left(1-\lambda_\eps^2\right)^2 \left(u_\eps^2-1-\sqrt{2}\beta\right)  \right\}, 
	\end{equation}
	and thus 
	\begin{equation}\label{eq:tildeetaeps-etaeps}
		\int_K \abs{\widetilde{\eta}_\eps - \eta_\eps} \,{\d}x \to 0, 
		\qquad \mbox{as } \eps \to 0.
	\end{equation}
	Since the sequences of positive functions 
	$\left( \widetilde{\eta}_\eps \right)_{\eps > 0}$, 
	$\left( \eta_\eps \right)_{\eps > 0}$ are bounded in $L^1(K)$, 
	for every $K \subset \Omega \setminus \spt \mu_\star$, by standard compactness 
	properties of Radon measures, they 
	converge in the sense of Radon measures along a subsequence to limiting measures 
	$\widetilde{\eta}_\star$, $\eta_\star$ defined on the whole $\Omega \setminus \spt\mu_\star$. 
	In view of~\eqref{eq:tildeetaeps-etaeps}, we have 
	\begin{equation}\label{eq:tildeeta*-eta*}
		\eta_\star \mres K = \widetilde{\eta}_\star \mres K	
	\end{equation}
	independently of the chosen subsequence.
	
	Theorem~4.1 in \cite{NagaseTonegawa} gives that $\widetilde{\eta}_\star$ is 
	$\H^1$-rectifiable, i.e., 
	$\widetilde{\eta}_\star = \widetilde{\theta}_\star \H^1 \mres \Sigma$, 
	where $\Sigma$ is $\H^1$-rectifiable and $\widetilde{\theta}_\star$ 
	denotes the density of $\widetilde{\eta}_\star$. In particular, the limit  
	\[
		\widetilde{\theta}_\star(x_0) := 
		\lim_{r \to 0} \frac{\widetilde{\eta}_\star(B(x_0,\,r))}{2r}
	\] 
	exists at any point $x_0 \in \Sigma$. Since rectifiability 
	is a local property, the rectifiability of $\widetilde{\eta}_\star$ 
	and~\eqref{eq:tildeeta*-eta*} imply 
	that $\eta_\star$ is rectifiable as well and, moreover, that  
	\[
		\theta_\star(x_0) = \widetilde{\theta}_\star(x_0), 
		\qquad \forall x_0 \in K.
	\]
	Furthermore, if we set
	\[
		\sigma_\beta := 
		\int_{-\sqrt{1+\sqrt{2}\beta}}^{\sqrt{1+\sqrt{2}\beta}} \sqrt{\frac{h_1(s)}{2}}\,{\d}s = \frac{\sqrt{2}}{3}\left(1 + \sqrt{2}\beta\right)^{3/2}, 
	\]
	then applying \cite[Theorem~5.1]{NagaseTonegawa} and 
	using~\eqref{eq:tilde-zeta-h1-eps-zeta-h1-eps} 
	and~\eqref{eq:tilde-nabla-u-nabla-u} leads to the fact that 
	$(2 \sigma_\beta)^{-1}{\theta_\star(x_0)}$ is an integer for all $x_0 \in K$. 
	In addition, thanks to~\eqref{eq:tilde-zeta-h1-eps-zeta-h1-eps-limit} 
	and~\eqref{eq:tilde-nabla-u-nabla-u}, \cite[Proposition~4.3]{NagaseTonegawa} tells 
	us that the discrepancy functions 
	\[
		\xi_\eps^{(h_1)} :=  
		\frac{1}{\eps} h_1\left(u_\eps\right) 
		- \frac{\eps}{2}\abs{\nabla u_\eps}^2
	\]
	satisfy
	\[
		\lim_{\eps \to 0} \int_K \abs{\xi^{(h_1)}_\eps} \,{\d}x = 0,
	\]
	i.e., equipartition holds in the limit. More precisely, if we define 
	the density of the potential energy functions (dually seen as measures) 
	\begin{equation}\label{eq:def-tilde-zeta-eps-h1}
		{\zeta}_{\eps}^{(h_1)}
		:= \frac{1}{\eps} h_1\left({u}_\eps\right), 
	\end{equation}
	we see that there exists a limiting potential energy measure 
	${\zeta}^{(h_1)}_\star$, which satisfies
	\[
		{\eta}_\star \mres K = 2 {\zeta}_\star^{(h_1)} \mres K,  
	\]
	so that, in particular, 
	\[
		\spt \left( {\eta}_\star \mres K \right) 
		= \spt \left( {\zeta}^{(h_1)}_\star \mres K \right).
	\] 
	Moreover, the density ${\mathfrak{h}}_\star(x_0)$ 
	of ${\zeta}_\star^{(h_1)}$ 
	exists at every point $x_0 \in K$ and satisfies  
	\[
	\lim_{r \to 0} \frac{{\zeta}^{(h_1)}_\star(B(x_0,\,r))}{2r} 
	= {\mathfrak{h}}_\star(x_0) = \frac{{\theta}_\star(x_0)}{2},
	\]
	so that $\sigma_\beta^{-1} {\mathfrak{h}}_\star(x_0)$ is an integer, 
	for any $x_0 \in K$. 
	Finally, by letting $K$ vary in 
	$\Omega \setminus \spt\mu_\star$, we obtain
	\begin{equation}\label{eq:h*-integer}
		\sigma_\beta^{-1} \mathfrak{h}_\star(x_0) \in \mathbb{N}, 
		\qquad \forall x_0 \in \Omega \setminus \spt\mu_\star. 
	\end{equation}
	\end{step}
	
	\begin{step}[Conclusion]
	In order to conclude the proof, we have to show that the 
	integrality of the rescaled density $\sigma_\beta^{-1} \mathfrak{h}_\star$  
	implies that the density $\mathfrak{v}_\star$ of $\zeta_\star$ 
	is integer-valued as well after rescaling, i.e, that 
	$\sigma_\beta^{-1} \mathfrak{v}_\star$ takes only integer values. 
	In fact, we are going to prove that 
	\begin{equation}\label{eq:v*=h*}
		\mathfrak{v}_\star(x_0) = \mathfrak{h}_\star(x_0), 
		\qquad \forall x_0 \in \Omega \setminus \spt\mu_\star.
	\end{equation}
	To this purpose, it suffices to 
	show that the difference $\zeta_\eps - \widetilde{\zeta}^{(h_1)}_\eps$ 
	tends to zero in $L^1\left(K\right)$ as $\eps \to 0$.
	%, for any  
	%simply connected, open set $G \csubset \Omega \setminus \spt\mu_\star$ 
	%with smooth boundary $\partial G$. (Being the conclusion pointwise 
	%in nature, it will follow by letting $G$ vary in $\Omega \setminus \spt \mu_\star$.)
	By the definition~\eqref{eq:zeta-eps} of $\zeta_\eps$, the 
	definition~\eqref{eq:def-tilde-zeta-eps-h1} of 
	${\zeta}^{(h_1)}_\eps$,~\eqref{eq:V-h}, and~\eqref{eq:h-bis}, we have 
	\begin{equation}\label{eq:diff-potentials}
	\begin{split}
		\zeta_\eps - \zeta^{(h_1)}_\eps 
		&= \frac{1}{\eps}\left( h(\u) + \frac{\beta}{\sqrt{2}} (1-\rho)\left(u_1^2 - u_2^2 - 1 - \frac{\beta + \beta \rho}{\sqrt{2}} \right) \right) 
		- \frac{1}{\eps}h_1(u_{1}) \\
		&= \frac{1}{\eps} h_2(u_1,\,u_2) + \frac{\beta}{\sqrt{2}} \left(\frac{1-\rho}{\eps}\right)\left(u_1^2 - 1 - \sqrt{2}\beta\right) + \frac{\beta}{\sqrt{2}}\left( \frac{1-\rho}{\eps} \right) \left( \frac{\beta}{\sqrt{2}}(\rho - 1) - u_2^2 \right)
	\end{split}
	\end{equation}
	All terms on the right-hand side of~\eqref{eq:diff-potentials} 
	vanish in $L^1\left( K \right)$ as $\eps \to 0$. 
	Indeed, by~\eqref{eq:decay-h2} (recall that $h_2(u_1,\,u_2) \geq 0$ pointwise, 
	if $\sqrt{2}\beta \geq 1$) and a covering argument, 
	\[
		\frac{1}{\eps} \int_K h_2(u_1,\,u_2) \,{\d}x \lesssim \eps^\alpha, 
		\qquad \mbox{as } \eps \to 0, 
	\]
	for every $\alpha \in (0,\,2)$. Next, by H\"{o}lder's inequality 
	and Lemma~\ref{lemma:improved-decay-rho}, 
	\[
	\begin{split}
		\int_K \abs{\left(\frac{1-\rho}{\eps}\right)\left(u_1^2 - 1 - \sqrt{2}\beta\right)} 
		&\leq \left(\int_K \left(\frac{1-\rho}{\eps}\right)^2 \,{\d}x \right)^{1/2} \left( \int_K \left(u_1^2 - 1 - \sqrt{2}\beta\right)^2 \,{\d}x \right)^{1/2} \\ 
		&\lesssim \sqrt{\eps}, \qquad \mbox{as } \eps \to 0.
	\end{split}
	\]
	Finally, by \cite[Proposition~2.6]{CDS1} and 
	Proposition~\ref{prop:decay-u2}, 
	\[
		\frac{1}{\eps} \int_K \left\{ (\rho - 1)^2 + u_2^2 \right\}\,{\d}x 
		\lesssim \eps, \qquad \mbox{as } \eps \to 0.
	\]
	Combining the last three inequalities with~\eqref{eq:diff-potentials}, 
	it follows that
	\[
		\int_K \abs{ \zeta_\eps - \zeta_\eps^{(h_1)} }\,{\d}x 
		\lesssim \sqrt{\eps}, \qquad \mbox{as } \eps \to 0.
	\]
	As a consequence, 
	\[
		\zeta_\star \mres K = \zeta_\star^{(h_1)} \mres K.
	\]
	In particular,
	\[
		\mathfrak{v}_\star = \mathfrak{h}_\star \qquad \mbox{on } K.
	\]
	Since $K$ was arbitrary in $\Omega \setminus \spt\mu_\star$, 
	by letting $K$ vary in $\Omega \setminus \spt\mu_\star$, we have 
	\[
		\mathfrak{v}_\star(x_0) = \mathfrak{h}_\star(x_0),  
		\qquad \forall x_0 \in \Omega \setminus \spt \mu_\star,
	\]
	i.e., we have proved~\eqref{eq:v*=h*}. 
	In particular, by~\eqref{eq:v*=h*} and~\eqref{eq:h*-integer}, 
	it follows that $\sigma_\beta^{-1} \mathfrak{v}_\star$ is an integer-valued 
	function on $\Omega \setminus \spt\mu_\star$. 
	Finally, the conclusion follows because, by \cite[Proposition~3.18(ii)]{CDS2},  
	$\mathfrak{v}_\star(a) = 0$ for every $a \in \spt\mu_\star$. 
	\end{step}
\end{proof}

%--------------------------
\paragraph{Acknowledgments}
F.L.D. would like to thank Institute of Science Tokyo for warm hospitality.
F.L.D. is a member of GNAMPA-INdAM and he is partially supported by 
INdAM-GNAMPA Project CUP E53C25002010001. Y.T. is partially supported by 
JSPS Grant-in-aid for scientific research (A) {\#}23H00085. 

\bibliographystyle{plain}
\bibliography{biblio}

\begin{flushright}
\Addresses
\end{flushright}

\end{document}